\newtheorem{theorem}{Theorem}[section]
\newtheorem{lemma}[theorem]{Lemma}
\newtheorem{proposition}[theorem]{Proposition}
\newtheorem{corollary}[theorem]{Corollary}
\newtheorem{definition}[theorem]{Definition}
\theoremstyle{definition}
\newtheorem{remark}[theorem]{Remark}
\newtheorem{example}[theorem]{Example}
\numberwithin{equation}{section}
\newcommand{\CC}{\mathbb{C}}
\newcommand{\ZZ}{\mathbb{Z}}
\renewcommand{\[}{\begin{equation}}
\renewcommand{\]}{\end{equation}}
\title[RKHS generated by the binomial coefficients]{Reproducing kernel Hilbert spaces
 generated by the binomial coefficients}
\begin{document}
\author[D. Alpay]{Daniel Alpay}
\address{(DA) Department of Mathematics \newline
Ben Gurion University of the Negev \newline P.O.B. 653, \newline
Be'er Sheva 84105, \newline ISRAEL} \email{dany@math.bgu.ac.il}
\author[P. Jorgensen]{Palle Jorgensen}
\address{(PJ)
Department of Mathematics\newline 14 MLH \newline The University
of Iowa, Iowa City,\newline IA 52242-1419 USA}
\email{jorgen@math.uiowa.edu}

\thanks{D. Alpay thanks the
Earl Katz family for endowing the chair which supported his
research. The research of the authors was supported in part by the
Binational Science Foundation grant 2010117.}

\keywords{Binomial Fourier transform, binomial coefficients,
reproducing kernel, Hilbert space, Hardy space, positive
definite, multipliers, infinite matrices, Hurwitz series, discrete
analytic functions} \subjclass{46E22, 47H60, 11B65} \maketitle

\begin{abstract}
We study a reproducing kernel Hilbert space of functions defined on
the positive integers and associated to the binomial coefficients. We introduce
two transforms, which allow us to develop a related harmonic
analysis in this Hilbert space. Finally, we mention connections with the
theory of discrete analytic functions, statistics, and with the quantum case.
\end{abstract}
\tableofcontents
\section{Introduction}
\label{sec1}
We show that the binomial coefficients form an orthonormal basis
(ONB) in a naturally formed reproducing kernel Hilbert space
$\mathcal H(K)$. We find explicit properties and formulas for the
kernel, and we identify two pairs of transforms, again with
explicit formulas. With the binomial functions forming an ONB in
$\mathcal H(K)$, one naturally then gets transforms as isometric
isomorphisms between $\mathcal H(K)$ and $\ell^2(\mathbb Z_+)$. We
write down two such transforms, and for each we compute its
inverse. Our work is motivated by applications to discrete
analytic functions \cite{2012arXiv1208.3699A}. This is followed up here, and we
outline additional applications to combinatorial probability
theory. In \cite{2012arXiv1208.3699A}, we introduce a product (in the sense of
hypergroups \cite{MR2606478}) in spaces of discrete analytic functions.
We use our present transforms for computing this product.
In the last section, as  a corollary to  Theorem 7.3 we establish a significant statistical property for our one-parameter family of kernels  $K_{\lambda}$, as well as for the corresponding spectral transforms: we show for each kernel within our family of kernels  $K_{\lambda}$  there is a probability space; hence an associated indexed family of probability spaces $( \Omega,\mathcal F,P^\lambda )$ . We prove that as the value of the parameter $\lambda$  vary, the corresponding measures  $P^\lambda$ are mutually singular.
For readers not familiar with reproducing kernels, we suggest
\cite{MR2002b:47144,aron,meschkowski,saitoh,schwartz}.
For a sample of references to combinatorial probability, see e.g,
\cite{MR2925971,MR2668912,MR2784561}.\\

Motivated by our previous work on discrete analytic functions
\cite{2012arXiv1208.3699A} we study
a reproducing kernel space of functions from
$\mathbb Z_+=\left\{0,1,2,\ldots\right\}$ into $\mathbb R$ associated to the binomial
coefficients.\\

We study the reproducing kernel Hilbert space (RKHS) $\mathcal H(K)$  generated by the kernel $K(x, y)$ 
consisting of the numbers in the Pascal triangle, i.e., for every  $(x, y)$   in 
$\mathbb Z_+ \times\mathbb Z_+$, set $K(x, y) = \begin{pmatrix}x+y\\x\end{pmatrix}$.
Using the Chu-Vandermonde formula, one sees that $K(x,y)$ is positive definite in the sense of kernel theory,  
hence the RKHS. We study a number of analytic features of $\mathcal H(K)$:  We find families of orthonormal bases in 
$\mathcal H(K)$, and we prove associated transform theorems. Moreover we compute these transforms explicitly, making use of 
infinite square matrices of upper and lower triangular form.  We further show that our transforms facilitate analysis on 
$\mathcal H(K)$. We further sketch a one-parameter deformation family with application .\smallskip

In the remaining part of this section we give results characterizing vectors in their respective reproducing kernel Hilbert spaces; see
Theorem \ref{tm11}.\\

{\bf Definitions.} Let for $x,n\in\mathbb Z_+$ such that $n\le x$
\[
\begin{pmatrix}x\\ n\end{pmatrix}=\frac{x(x-1)\cdots (x-n+1)}{n!}
\]
the binomial coefficient. We set
\[
\label{eqen} e_n(x)=\begin{cases}\,\,0\,,
\quad\hspace{2.9cm}{\rm if}\quad x<n,\\
&\\
\begin{pmatrix}x\\ n\end{pmatrix}=\frac{x(x-1)\cdots (x-n+1)}{n!},\,{\rm if}
\quad x\ge n,
\end{cases}
\]
and
\[
\label{rksum} K(x,y)=\sum_{n=0}^\infty e_n(x)e_n(y)=\begin{pmatrix}x+y\\x\end{pmatrix} \quad
x,y\in\mathbb Z_+.
\]
The sum \eqref{rksum} is always finite and well defined, and
defines a positive definite function on $\mathbb Z_+$.\\

Using the Chu-Vandermonde formula (see for instance \cite[formula (10), p. 217]{tucker})
\[
\begin{pmatrix}x+y\\ t\end{pmatrix}=\sum_{k=0}^t\begin{pmatrix}x\\ k\end{pmatrix}\begin{pmatrix}y\\ t-k\end{pmatrix}
\]
(with $t,x,y\in\mathbb N_0$ and $t\le y$)
 one can rewrite $K(x,y)$ as
\[
\label{rksum2}
K(x,y)=\sum_{n=0}^{x\wedge y}\begin{pmatrix}x\\
n\end{pmatrix}
\begin{pmatrix}y\\ n\end{pmatrix},
\]
or, equivalently,
\[
\label{rksum3}
K(x,y)=\sum_{n=0}^{x\wedge y}\frac{x^{[n]}y^{[n]}}{(n!)^2},
\]
where we have set
\[
\label{xnstr}
x^{[n]}=x(x-1)\cdots (x-n+1).
\]
Writing the kernel $K(x,y)$ as an infinite matrix $K$ we note that we have
\[
\label{eq:LLstar}
K=LL^*,
\]
where $L$ denote the following infinite lower triangular matrix
consisting of the binomial coefficients
\[
\label{eq:l}
\begin{split}
L&=\\
&\hspace{-5mm}=
\small{
\left(
\begin{array}{cccccccccccc}
1&0&0&0&0&0&0&\cdots&\quad&\cdot&0&\cdots\\
1&1&0&0&0&0&0&\cdots&\quad&\cdot&0&\cdots\\
1&2&1&0&0&0&0&\cdots&\quad&\cdot&0&\cdots\\
1&3&3&1&0&0&0&\cdots&\quad&\cdot&0&\cdots\\
1&4&6&4&1&0&0&\cdots&\quad&\cdot&0&\cdots\\
\vdots&\vdots& & & & & & & & & &\\
1&n&\begin{pmatrix}n\\2\end{pmatrix}&\cdots&\begin{pmatrix}n\\2\end{pmatrix}&n&1&0
&0&\cdots&0&\cdots \\
1&n+1&\begin{pmatrix}n+1\\2\end{pmatrix}&\cdots&&\begin{pmatrix}n+1\\2\end{pmatrix}
&n+1&1&0
&\cdots&0&\cdots \\
\vdots&\vdots&\vdots&&&\vdots&\vdots&\vdots&1&&&\\
\vdots&\vdots&\vdots&&&\vdots&\vdots&\vdots&\vdots&&&
\end{array}
\right)}
\end{split}
\]

We denote by $\mathcal H(K)$ the reproducing kernel of real
valued functions with domain $\mathbb Z_+$ and with reproducing
kernel $K(x,y)$.
This latter space consists of all functions
of the form
\[
g(x)=\sum_{n=0}^\infty a_ne_n(x),
\]
where the $a_n\in\mathbb R$, and with norm
\[
\|g\|^2_{\mathcal H(K)}=\sum_{n=0}^\infty a_n^2.
\]
See \cite[corollary 4, p. 169]{schwartz}.\\

It follows from \eqref{eq:LLstar} that we have:

\begin{theorem}
\label{tm11}
Let $f\in\mathbb R^{\mathbb Z_+}$. Then,
\[
\sum_{x\in\mathbb Z_+}K(\cdot, x)f(x)\in\mathcal H(K)\quad\iff\quad L^*f\in
\ell^2(\mathbb Z_+).
\]
\end{theorem}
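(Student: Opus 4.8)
The plan is to read the statement entirely through the factorization $K=LL^{*}$ of \eqref{eq:LLstar}, together with the description of $\mathcal H(K)$ recalled just before Theorem~\ref{tm11}: sending $g$ to its coefficient sequence along the orthonormal basis $\{e_n\}$ identifies $\mathcal H(K)$ isometrically with $\ell^2(\mathbb Z_+)$; equivalently, the functions in $\mathcal H(K)$ are exactly the $Lc$ with $c\in\ell^2(\mathbb Z_+)$, where $(Lc)(x)=\sum_{n=0}^{x}\binom{x}{n}c_n$. Two structural facts about $L$ carry the argument. First, viewed as a map $\mathbb R^{\mathbb Z_+}\to\mathbb R^{\mathbb Z_+}$, $L$ acts by a finite sum at each argument and is injective, being lower triangular with $1$'s on the diagonal; hence the $\{e_n\}$-coefficients of any $g\in\mathcal H(K)$ are determined by $g$ already as a bare function on $\mathbb Z_+$. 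Second, for each fixed $x$ the sequence $(e_n(x))_n$ is finitely supported (it vanishes for $n>x$), and by the reproducing property its entries are the $\{e_n\}$-coefficients of $K(\cdot,x)$; thus $K(\cdot,x)=\sum_{n}e_n(x)\,e_n$ inside $\mathcal H(K)$.

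Next I would establish the pointwise identity
\[
\sum_{x\in\mathbb Z_+}K(\cdot,x)f(x)=\sum_{n}(L^{*}f)_n\,e_n=L(L^{*}f),
\]
where $(L^{*}f)_n=\sum_{x}e_n(x)f(x)$. Evaluating the left side at a point $y$ gives $\sum_{x}\bigl(\sum_{n=0}^{y}e_n(y)e_n(x)\bigr)f(x)$; since the inner $n$-sum is finite it may be interchanged with $\sum_{x}$, yielding $\sum_{n=0}^{y}e_n(y)\bigl(\sum_{x}e_n(x)f(x)\bigr)=\sum_{n}e_n(y)(L^{*}f)_n$. The delicate point is convergence: for the left side to be meaningful pointwise, each series $\sum_{x}e_n(x)f(x)$ must converge. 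I would observe that this is automatic, because $\binom{x+y}{x}$ is a polynomial of degree $y$ in $x$: reading pointwise convergence off at $y=0,1,2,\dots$ forces, inductively, convergence of $\sum_{x}x^{[n]}f(x)$, hence of $\sum_{x}e_n(x)f(x)$, for every $n$. So ``$\sum_{x}K(\cdot,x)f(x)$ is a well-defined function on $\mathbb Z_+$'' and ``$L^{*}f$ is a well-defined sequence'' are the same hypothesis, and under it the displayed identity holds.

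With the identity in hand the theorem is immediate. The function $\sum_{x}K(\cdot,x)f(x)=\sum_{n}(L^{*}f)_n e_n$ lies in $\mathcal H(K)$ if and only if its coefficient sequence $(L^{*}f)_n$ belongs to $\ell^2(\mathbb Z_+)$, i.e.\ $L^{*}f\in\ell^2(\mathbb Z_+)$: the reverse implication is the definition of $\mathcal H(K)$, and the forward implication uses injectivity of $L$ to identify the abstract $\{e_n\}$-coefficients of the element of $\mathcal H(K)$ with $(L^{*}f)_n$. I expect the only genuine obstacle to be this convergence bookkeeping --- pinning down the precise sense in which $\sum_{x}K(\cdot,x)f(x)$ is summed, and checking that $L$ may be passed through the sum; the rest is a mechanical translation of $K=LL^{*}$ and of the known form of $\mathcal H(K)$.
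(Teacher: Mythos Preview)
Your proposal is correct and follows exactly the route the paper indicates: the paper gives no detailed proof, merely stating that the theorem ``follows from \eqref{eq:LLstar}'' (i.e., from $K=LL^{*}$), and your argument is a careful unpacking of precisely that implication. Your attention to the convergence bookkeeping---showing that pointwise definition of $\sum_{x}K(\cdot,x)f(x)$ is equivalent to $L^{*}f$ being well-defined as a sequence, and invoking injectivity of $L$ on $\mathbb R^{\mathbb Z_+}$ for the forward direction---fills in exactly the details the paper omits.
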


{\bf Organization.}
The paper consists of seven sections and of an appendix
besides the introduction,
and its outline is as follows:  In Section \ref{sec3} we review
some results on binomial
coefficients. Sections \ref{sec5}-\ref{sec7} contain the main results of the paper,
on the above mentioned transforms. The first transform, which we call the {\sl
binomial Fourier transform}, and its inverse, are studied in
Section \ref{sec5}. A natural isomorphism between $\mathcal H(K)$
and the Hardy space $\mathbf H^2(\mathbb D)$ is defined in that
section. The following two sections, Section
\ref{sec7} is devoted to the second transform. In Section
\ref{sec8} we study some links with the theory of discrete
analytic functions. In Section \ref{sec9} we considered the case
of $q$-binomial coefficients. We present some applications in the last section.

\section{Some formulas for binomial coefficients}
\label{sec3}
\setcounter{equation}{0}
In this section we establish some duality relations for the
binomial functions in \eqref{eqen}, beginning with Lemma 3.1.
This lemma in turn will be used in our results from sections
\ref{sec5} through \ref{sec7} dealing with the two transforms. The
transforms throw light on the RKHS of the binomial functions, but
they also imply new formulas for these functions,
%generating
%matrix-functions (Theorem \ref{tm32}),
linear relations (Theorem
3.3), and their use in the study of discrete analytic functions
(section \ref{sec8}).\\

We make the following summation convention. In computations below,
we will be using summations over index-values in finite or
infinite subsets of $\mathbb Z_+ = \left\{0, 1, 2,
\cdots\right\}$; in some cases, multiple summations inside a
single computation. These summations will then always entail
ranges over summation indices $x, y, k, n, \cdots$ which are
limited by choice of segments in $\mathbb Z_+$; and the respective
summations will be specified by the segment-endpoints.

\begin{lemma}
\label{le:eq}
Let $m,n\in\mathbb Z_+$ be such that $m\le n$. Then,
\[
\sum_{j=m}^n(-1)^{m+j}\begin{pmatrix}n
\\j\end{pmatrix}\begin{pmatrix}j\\ m\end{pmatrix}=\delta_{m,n}
\]
\end{lemma}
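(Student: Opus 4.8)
The plan is to recognize this as the statement that the infinite lower-triangular Pascal matrix $L$ in \eqref{eq:l} is inverted by the signed Pascal matrix. Concretely, introduce the lower-triangular matrix $M$ with entries $M_{n,j}=(-1)^{n+j}\binom{n}{j}$ for $j\le n$. The claim is precisely that $(ML)_{n,m}=\delta_{m,n}$, i.e. $M=L^{-1}$, once one reads off $L_{n,j}=\binom{n}{j}=e_j(n)$ and $M_{j,m}=(-1)^{j+m}\binom{j}{m}$ from the definitions. Since both matrices are lower triangular with $1$'s on the diagonal, the product is again lower triangular with $1$'s on the diagonal, so the content of the lemma is that all strictly-lower entries of $ML$ vanish; that is the case $m<n$, while $m=n$ is immediate.

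First I would fix $m<n$ and rewrite the sum using the absorption/subset identity $\binom{n}{j}\binom{j}{m}=\binom{n}{m}\binom{n-m}{j-m}$, which holds for $m\le j\le n$. Pulling the $j$-independent factor $\binom{n}{m}$ out, and substituting $i=j-m$, the sum becomes
\[
\binom{n}{m}\sum_{i=0}^{n-m}(-1)^{i}\binom{n-m}{i},
\]
after noting $(-1)^{m+j}=(-1)^{2m+i}=(-1)^{i}$. The inner alternating sum is $(1-1)^{n-m}=0$ by the binomial theorem, because $n-m\ge 1$; hence the whole expression vanishes, giving $\delta_{m,n}$ in that range. For $m=n$ the sum has the single term $j=n$, equal to $(-1)^{2n}\binom{n}{n}\binom{n}{n}=1$, again matching $\delta_{m,n}$.

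The only genuine ingredients are the subset-of-a-subset identity $\binom{n}{j}\binom{j}{m}=\binom{n}{m}\binom{n-m}{j-m}$ and the vanishing of $\sum_i(-1)^i\binom{N}{i}$ for $N\ge 1$; both are completely standard, so there is no real obstacle here. If one prefers to avoid the subset identity, an alternative is a generating-function argument: the coefficient of $t^m$ in $\sum_{j}(-1)^{m+j}\binom{n}{j}(1+t)^j=(-1)^m(1-(1+t))^n=(-1)^m(-t)^n=(-1)^{m+n}t^n$, whose $t^m$-coefficient is $\delta_{m,n}$ — this packages the same computation more slickly. I would present the first (combinatorial) version as the main proof, since it keeps the matrix-inversion interpretation transparent, which is what the later transform theorems in Sections \ref{sec5}--\ref{sec7} will exploit.
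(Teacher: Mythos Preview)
Your proof is correct and follows essentially the same route as the paper's: both use the subset-of-a-subset identity $\binom{n}{j}\binom{j}{m}=\binom{n}{m}\binom{n-m}{j-m}$, then shift the index by $m$ and invoke $(1-1)^{n-m}=0$ for $m<n$. Your additional framing via the Pascal-matrix inverse and the generating-function variant are nice context but not needed for the argument.
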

\begin{proof} The result is clear when $m=n$. Let us assume now $m<n$. We first note that
\[
\begin{split}
\begin{pmatrix}n\\ j\end{pmatrix}\begin{pmatrix}j\\ m\end{pmatrix}&=\frac{n!}{(n-j)!j!}\frac{j!}{(j-m)!m!}\\
&=\frac{n!}{(n-j)!(j-m)!m!}\\
&=\frac{n!}{(n-m)!m!}\frac{(n-m)!}{(n-j)!(j-m)!}\\
&=
\begin{pmatrix}n\\ m\end{pmatrix}\begin{pmatrix}n-m\\j- m\end{pmatrix}.
\end{split}
\]
Hence,
\[
\begin{split}
\sum_{j=m}^n(-1)^{m+j}\begin{pmatrix}n
\\j\end{pmatrix}\begin{pmatrix}j\\ m\end{pmatrix}&=\sum_{j=m}^n(-1)^{m+j}
\begin{pmatrix}n
\\m\end{pmatrix}
\begin{pmatrix}n-m\\j- m\end{pmatrix}\\
&=\begin{pmatrix}n\\m\end{pmatrix}\sum_{j=m}^n(-1)^{m+j}\begin{pmatrix}n-m\\j- m\end{pmatrix}\\
\intertext{\mbox{\rm and setting $\ell=j-m$},}
&=\begin{pmatrix}n\\m\end{pmatrix}\sum_{\ell=0}^{n-m}(-1)^\ell\begin{pmatrix}n-m\\ \ell \end{pmatrix}\\
&=\begin{pmatrix}n\\m\end{pmatrix}(1-1)^{n-m}\\
&=0.
\end{split}
\]
\end{proof}

The following result is a corollary of Lemma \ref{le:eq},
and plays an important role in the study of the
$\mbox{}^\blacktriangle$-transform. See Theorem \ref{tm:inverse1}
below.

\begin{theorem}
\label{tm33}
Let $K(\ell,m)$ be defined by \eqref{rksum2}. Then it holds that:
\begin{equation}
\label{eq:inverse11}
\sum_{\ell=0}^n(-1)^{n+\ell}\begin{pmatrix}n\\
\ell\end{pmatrix}K(\ell,m)=
\begin{cases}
\begin{pmatrix}m\\n\end{pmatrix},\quad if\quad
m\ge n,\\
\\
\,\,\,0\,,\,\,\, \quad \hspace{3mm}if\quad m<n.
\end{cases}
\end{equation}
\end{theorem}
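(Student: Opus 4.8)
The plan is to expand $K(\ell,m)$ using the finite-sum representation \eqref{rksum2} and then interchange the order of summation so that the inner sum becomes an instance of Lemma \ref{le:eq}. Concretely, I would write
\[
\sum_{\ell=0}^n(-1)^{n+\ell}\binom{n}{\ell}K(\ell,m)
=\sum_{\ell=0}^n(-1)^{n+\ell}\binom{n}{\ell}\sum_{j=0}^{\ell\wedge m}\binom{\ell}{j}\binom{m}{j},
\]
and then swap the two sums. The subtlety in the swap is the coupling of the ranges: $j$ runs from $0$ to $\ell\wedge m$, while $\ell$ runs from $0$ to $n$. Since $\binom{\ell}{j}=0$ whenever $j>\ell$, I can harmlessly let $j$ run all the way to $m$ (or even to $\infty$) and $\ell$ run from $0$ to $n$ without the constraint $j\le\ell$, the extra terms vanishing automatically. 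After the swap the expression becomes
\[
\sum_{j=0}^{m}\binom{m}{j}\sum_{\ell=0}^n(-1)^{n+\ell}\binom{n}{\ell}\binom{\ell}{j}.
\]

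Now the inner sum over $\ell$ is exactly $\sum_{\ell=j}^n(-1)^{n+\ell}\binom{n}{\ell}\binom{\ell}{j}$ (the terms with $\ell<j$ are zero), which by Lemma \ref{le:eq} — applied with $m$ there replaced by $j$ and $n$ there equal to $n$ — equals $\delta_{j,n}$, provided $j\le n$. So the whole double sum collapses: if $n\le m$, the only surviving term is $j=n$, giving $\binom{m}{n}$; if $n>m$, then $j$ never reaches $n$ (since $j\le m<n$), so every inner sum vanishes and the total is $0$. This reproduces exactly the two cases in \eqref{eq:inverse11}.

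The main point requiring care is the legitimacy of dropping the $j\le\ell$ constraint before interchanging the sums, and making sure the hypothesis $m\le n$ in Lemma \ref{le:eq} is respected when it is invoked with the pair $(j,n)$: one needs $j\le n$, which indeed holds for every $j$ appearing because $j\le m$ and, in the nontrivial case, $m\ge n$ forces nothing — but when $m<n$ we simply have $j\le m<n$ so the lemma still applies and returns $0$. All of these are finite sums, so there are no convergence issues; the entire argument is a bookkeeping exercise in reindexing plus one appeal to Lemma \ref{le:eq}. I would present it as the two-line display above, followed by the case split, with a sentence justifying the range manipulation.
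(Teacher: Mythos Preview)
Your proposal is correct and follows essentially the same route as the paper's proof: expand $K(\ell,m)$ via \eqref{rksum2}, interchange the two finite sums, and invoke Lemma~\ref{le:eq} on the inner $\ell$-sum to collapse everything to $\binom{m}{j}\delta_{j,n}$. The only cosmetic difference is that the paper writes the outer $j$-range as $0\le j\le n\wedge m$ rather than your $0\le j\le m$, which sidesteps the need to remark separately that the inner sum vanishes when $j>n$; you might adopt that bookkeeping to tighten the exposition.
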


\begin{proof} By the definition of $K$, we have
\[
\nonumber
\begin{split}
\sum_{\ell=0}^n (-1)^{n+\ell}\begin{pmatrix}n \\
\ell\end{pmatrix}K(\ell,m)&=
\sum_{\ell=0}^n\sum_{j=0}^{m\wedge\ell} (-1)^{n+\ell}\begin{pmatrix}n \\
\ell\end{pmatrix}\begin{pmatrix}m \\
j\end{pmatrix}\begin{pmatrix}\ell \\
j\end{pmatrix}\\
&=\sum_{j=0}^{n\wedge m}\begin{pmatrix}m \\
j\end{pmatrix}\left(\sum_{\ell=j}^n
(-1)^{n+\ell}\begin{pmatrix}n \\
\ell\end{pmatrix}\begin{pmatrix}\ell \\
j\end{pmatrix}\right)\\
\intertext{\mbox{\rm and, applying Lemma \ref{le:eq}}}
&=\sum_{j=0}^{n\wedge m}\begin{pmatrix}m \\
j\end{pmatrix}\delta_{j,n}\\
&=\begin{cases}
\begin{pmatrix}m\\n\end{pmatrix},\quad if\quad
m\ge n,\\
\\
\,\,\,0\,,\,\,\, \quad \hspace{3mm}if\quad m<n.
\end{cases}
\end{split}
\]
\end{proof}

\begin{lemma}
\label{newlemma}
Let $K(x,y)$ be defined by \eqref{rksum2}. Then it holds that:
\begin{eqnarray}
\label{formula1}
K(x,x)&=&\sum_{n=0}^x
\begin{pmatrix}x\\ n\end{pmatrix}^2=\begin{pmatrix}2x\\ x\end{pmatrix}=e_x(2x),\\
|K(x,y)|&\le& K(x,x)^{1/2}K(y,y)^{1/2}=\left(\begin{pmatrix}2x\\
x\end{pmatrix}
\begin{pmatrix}2y\\ y\end{pmatrix}\right)^{1/2}.
\end{eqnarray}
\end{lemma}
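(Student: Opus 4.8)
The plan is to treat the two displayed lines separately. The first is just a matter of unwinding definitions together with the Chu--Vandermonde identity already invoked in the introduction, and the second is the standard Cauchy--Schwarz estimate valid for any positive definite kernel, combined with the value of $K(x,x)$ supplied by the first line.

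For the first line I would start from \eqref{rksum2} and set $y=x$, which gives $K(x,x)=\sum_{n=0}^{x}\binom{x}{n}^{2}$ immediately. The middle equality $\sum_{n=0}^{x}\binom{x}{n}^{2}=\binom{2x}{x}$ is exactly the specialization $y=x$ of the formula $K(x,y)=\binom{x+y}{x}$ recorded in \eqref{rksum}, i.e.\ the Chu--Vandermonde identity (one could also give the usual combinatorial argument: count the $x$-subsets of a $2x$-element set split into two halves of size $x$ according to how many elements are taken from the first half). Finally $e_x(2x)=\binom{2x}{x}$, since $2x\ge x$ so the case $x\ge n$ of \eqref{eqen} applies with $n=x$. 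This closes the first chain of equalities.

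For the second line I would use that $K$ is the reproducing kernel of $\mathcal H(K)$: for each $y\in\mathbb Z_+$ the function $K(\cdot,y)$ lies in $\mathcal H(K)$, $\langle K(\cdot,y),K(\cdot,x)\rangle_{\mathcal H(K)}=K(x,y)$, and $\|K(\cdot,y)\|_{\mathcal H(K)}^{2}=K(y,y)$. Cauchy--Schwarz in $\mathcal H(K)$ then gives $|K(x,y)|\le\|K(\cdot,x)\|_{\mathcal H(K)}\,\|K(\cdot,y)\|_{\mathcal H(K)}=K(x,x)^{1/2}K(y,y)^{1/2}$. Equivalently, one can note that $\bigl(\begin{smallmatrix}K(x,x)&K(x,y)\\ K(y,x)&K(y,y)\end{smallmatrix}\bigr)$ is a principal $2\times 2$ submatrix of the positive definite kernel $K$, hence positive semidefinite, so its determinant $K(x,x)K(y,y)-K(x,y)^{2}$ is nonnegative. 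Substituting the value $K(x,x)=\binom{2x}{x}$ from the first line then yields the explicit bound $\left(\binom{2x}{x}\binom{2y}{y}\right)^{1/2}$.

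There is essentially no obstacle: both assertions follow from material already in hand — the formula $K(x,y)=\binom{x+y}{x}$, the definition of $e_n$ in \eqref{eqen}, and the general Cauchy--Schwarz inequality for positive definite kernels. The only point worth a word of care is that $K(x,y)$ is real and symmetric by \eqref{rksum2}, so that $|K(x,y)|$ is the ordinary absolute value and the determinant argument applies verbatim.
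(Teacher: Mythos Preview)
Your proposal is correct and essentially matches the paper's proof. The only minor difference is in justifying $\sum_{n=0}^{x}\binom{x}{n}^{2}=\binom{2x}{x}$: you invoke the Chu--Vandermonde formula already recorded in \eqref{rksum}, while the paper re-derives this special case directly by comparing the coefficient of $z^{x}$ on both sides of $(1+z)^{2x}=(1+z)^{x}(1+z)^{x}$; the second claim is handled by Cauchy--Schwarz in both.
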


\begin{proof} 
The first claim follows from setting $x=y$ in \eqref{rksum2}.
The second claim follows from the Cauchy-Schwarz inequality, since $K$ is positive
definite.
\end{proof}

\section{The binomial Fourier transform}
\label{sec5}
\setcounter{equation}{0}
In this section we introduce the first family of transforms.
We then use them in order to characterize functions on
$\mathbb Z_+$ belonging to $\mathcal H(K)$,
their properties, both analytic and algebraic. For example in Theorem \ref{tm54},
we give a natural isomorphism between $\mathcal H(K)$ and the Hardy space of the
disk. In Theorem \ref{tm512} we show that the orthonornal vectors $e_n$  in
$\mathcal H(K)$ from \eqref{eqen}  generate an algebra, or rather a hypergroup.
See \cite{MR2606478}.\\

The definition of the $\mbox{}^\blacktriangle$-transform does not
need the hypothesis that the sequence is in $\ell_2(\mathbb
Z_+)$, and is as follows:

\begin{definition}
\label{def51}
Let $f$ be a function from $\mathbb Z_+$ into
$\mathbb R$. The binomial Fourier transform of $f$ is defined by
\begin{equation}
\label{direct}
f^\blacktriangle(x)=\sum_{j=0}^x \begin{pmatrix}x\\
j\end{pmatrix}f(j),\quad x=0,1,2\ldots.
\end{equation}
\end{definition}
Thus,
\begin{equation}
\begin{split}
f^\blacktriangle(x)&=f(0)+xf(1)+\frac{x(x-1)}{2}f(2)+\frac{x(x-1)(x-2)}{6}f(3)+\cdots\\
&=\sum_{n=0}^\infty f(n)e_n(x).
\end{split}
\label{newsum}
\end{equation}

The sums in \eqref{direct} are finite, and $f^\blacktriangle$
exists for every function from $\mathbb Z_+$ into $\mathbb R$. We note that
\eqref{direct} can be written as
\[
\label{direct21}
f^{\blacktriangle}=Lf=L(1)f,
\]
where $L(\lambda)$ is defined in Section \ref{lastsec} below by \eqref{eq1}.\\

The image of the function $f(j)\equiv 0$ is the zero
function. The image of $f(j)=(-1)^j$ is the function
\[
f^\blacktriangle (x)=\delta_{0,x}.
\]
while the image of $f(j)=1$ is $f^\blacktriangle (x)=2^x$.\\

Another example of interest is:
\begin{example}
Let $f_a(j)=a^{*j}$, where $a\in \mathbb C$. Then,
\begin{equation}
\label{newsum1}
f^\blacktriangle(x)=\sum_{n=0}^\infty  a^{*n}e_n(x)
\end{equation}
is given by
\begin{equation}
\label{newsum0}
f_a^\blacktriangle(x)
=\sum_{j=0}^x\begin{pmatrix}x\\ j\end{pmatrix}a^{*j}=(1+a^*)^x,\quad x=0,1,2,\ldots
\end{equation}
\end{example}

Let now $\mathbf H^2(\mathbb D)$ denote
the Hardy space of complex-valued functions $f(z)=\sum_{n=0}^\infty a_nz^n$
analytic in the open unit disk and such that
\[
\|f\|_2^2:=\sum_{n=0}^\infty|a_n|^2<\infty.
\]
We define a map $\psi$ from the complexified of $\mathcal H(K)$ into $\mathbf H^2(\mathbb D)$ by
\begin{equation}
\psi(e_n)=z^n.
\end{equation}

\begin{theorem}
%\mbox{}\\
The map $\psi$ is unitary from the complexified of $\mathcal H(K)$ onto the Hardy space
$\mathbf H^2(\mathbb D)$. Moreover it associates to the function $f_a:$
$x\mapsto (1+a^*)^x,\, x\in\mathbb Z_+$ the
function $z\mapsto \frac{1}{1-za^*},\,z\in\mathbb D$. In particular,
for $a,b$ in the open unit disk $\mathbb D$
\begin{equation}
\langle  f_a^\blacktriangle,f_b^\blacktriangle\rangle_{\mathcal H(K)}=\frac{1}{1-a^*b}.
\label{newcauchy}
\end{equation}
\label{tm54}
\end{theorem}
\begin{proof}
The map is unitary since it maps an orthonormal basis onto an orthonormal basis. The claim on the image of the sequence
$f_a$ follows from \eqref{newsum0} and \eqref{newsum1}. Finally, \eqref{newcauchy} follows from the unitarity of $\psi$, the formula
\[
\frac{1}{1-za^*}=\sum_{n=0}^\infty z^na^{*n}
\]
and the definition of the inner product of $\mathbf H^2(\mathbb D)$.
\end{proof}

The next proposition is used to prove that the map $\mbox{}^\blacktriangle$ is
unitary from
$\ell^2(\mathbb Z_+)$ onto $\mathcal H(K)$. See Theorem \ref{tm:inverse1}.

\begin{proposition}
\label{eigenvalue}
Let $e_n$ defined by \eqref{eqen}, and let
\[
\label{eqrfn}
\widetilde{e_n}(m)=(-1)^{n+m}e_n(m), \quad and\quad \delta_n(m)=\delta_{n,m}.
\]
Then
\[
\label{eqen1}
\delta_n^\blacktriangle=e_n\quad and\quad
(\widetilde{e_n})^\blacktriangle=\delta_{n}.
\]
\end{proposition}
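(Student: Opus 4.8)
The plan is to verify the two identities in \eqref{eqen1} by direct computation from the definition \eqref{direct} of the $\blacktriangle$-transform, using the combinatorial lemmas already established.

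First I would handle $\delta_n^\blacktriangle = e_n$. Applying \eqref{direct} to $f = \delta_n$ gives
\[
\delta_n^\blacktriangle(x) = \sum_{j=0}^x \begin{pmatrix}x\\ j\end{pmatrix}\delta_n(j) = \begin{pmatrix}x\\ n\end{pmatrix},
\]
where the single surviving term is present only when $n \le x$, and the binomial coefficient is zero (or the term absent) when $n > x$. Comparing with \eqref{eqen}, this is exactly $e_n(x)$, so the first identity is immediate; this step is routine and should be dispatched in a line or two.

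The second identity $(\widetilde{e_n})^\blacktriangle = \delta_n$ is the substantive one. Applying \eqref{direct} to $f = \widetilde{e_n}$ and inserting the definitions,
\[
(\widetilde{e_n})^\blacktriangle(m) = \sum_{j=0}^m \begin{pmatrix}m\\ j\end{pmatrix}\widetilde{e_n}(j) = \sum_{j=0}^m \begin{pmatrix}m\\ j\end{pmatrix}(-1)^{n+j}\begin{pmatrix}j\\ n\end{pmatrix},
\]
since $e_n(j) = \binom{j}{n}$ (with the convention that this vanishes for $j < n$, so the sum effectively runs from $j = n$). Pulling the sign $(-1)^n$ outside and writing $(-1)^{n+j} = (-1)^{n+j}$, I would compare this with the alternating sum in Lemma \ref{le:eq}. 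Indeed for $m \ge n$ the sum becomes $(-1)^{n-m}\sum_{j=n}^m (-1)^{m+j}\binom{m}{j}\binom{j}{n}$, wait — one must be slightly careful about whether the prefactor sign is $(-1)^n$ or something else, but after fixing the bookkeeping the inner sum is precisely $\sum_{j=n}^m (-1)^{n+j}\binom{m}{j}\binom{j}{n}$, which by Lemma \ref{le:eq} (with the roles of $m,n$ there played by $n,m$ here) equals $\delta_{n,m}$. For $m < n$ the sum is empty and equals $0 = \delta_{n,m}$, so the two cases combine to give exactly $\delta_n(m)$.

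The main obstacle, such as it is, is purely notational: matching the index names and sign conventions in the present computation to those in the statement of Lemma \ref{le:eq} (where the summation variable is $j$, the outer index is $n$, and the inner index is $m$), and confirming that the overall sign $(-1)^{n+m}$ collapses correctly — on the diagonal $m = n$ it is $+1$, which is what is needed. No genuinely new combinatorial input is required beyond Lemma \ref{le:eq}; once the indices are aligned the result follows.
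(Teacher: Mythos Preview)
Your proposal is correct and follows essentially the same route as the paper: the first identity is dispatched directly from the definition of $e_n$, and the second is reduced to the alternating sum $\sum_{j=n}^m(-1)^{n+j}\binom{m}{j}\binom{j}{n}$, which is exactly Lemma~\ref{le:eq} with the roles of $m$ and $n$ interchanged. Your hesitation about an extra sign prefactor is unnecessary---the sum is already in the lemma's form with no leftover factor---so the write-up can be streamlined, but the argument is the paper's.
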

\begin{proof}
The first claim follows from the definition of $e_n$. As for the second claim, we have:
\[
\nonumber
\begin{split}
(\widetilde{e_n})^\blacktriangle(m)&=\sum_{j=0}^m\begin{pmatrix}m\\j\end{pmatrix}(-1)^{n+j}e_n(j)\\
&=\begin{cases}\,\, 0,\quad {\rm if}\quad m<n \quad(\mbox{\rm since $e_n(j)=0$ for $j<n$})\\
\\
\sum_{j=n}^m\begin{pmatrix}m\\j\end{pmatrix}\begin{pmatrix}j\\n\end{pmatrix}(-1)^{n+j},\quad {\rm if}\quad m\ge n.
\end{cases}
\end{split}
\]
We conclude by using Lemma \ref{le:eq} to compute this last sum.
\end{proof}

It follows from \eqref{newcauchy} that most, if not all,
the problems considered in $\mathbf H^2(\mathbb D)$
can be transposed in a natural way in the space $\mathcal H(K)$.\smallskip

We now define the function $\epsilon_n(j)=e_j(n)$. Thus $(\epsilon_n(j))$ can be identified with the finitely nonzero
sequence of $\ell^2(\mathbb Z_+)$ whose nonzero terms are the first $n+1$ terms given by $\begin{pmatrix}n \\j\end{pmatrix},\, j=0,
\ldots, n$.

\begin{lemma}
\label{la:direct}
Let $n,m\in \mathbb Z_+$. Then,
\begin{eqnarray}
\label{direct1}
(\epsilon_n^\blacktriangle)(x)&=&K(x,n)\\
\label{direct2}
\langle
\epsilon_n,\epsilon_m\rangle_{\ell^2(\mathbb Z_+)}&=&
 \langle K(\cdot, n),K(\cdot, m)\rangle_{\mathcal
H(K)}.
\end{eqnarray}
\end{lemma}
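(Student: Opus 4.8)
The plan is to prove \eqref{direct1} by a one-line computation directly from Definition \ref{def51}, and then to deduce \eqref{direct2} from the reproducing property of $K$ rather than from the later fact that $\mbox{}^\blacktriangle$ is unitary, so that the lemma is self-contained.

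For \eqref{direct1}, recall that $\epsilon_n$ is the (finitely supported) sequence $\epsilon_n(j)=\binom{n}{j}$, equivalently $\epsilon_n=L^*\delta_n$, the $n$-th column of $L^*$. Applying the $\mbox{}^\blacktriangle$-transform and using that $\binom{n}{j}=0$ for $j>n$,
\[
\epsilon_n^\blacktriangle(x)=\sum_{j=0}^x\binom{x}{j}\binom{n}{j}=\sum_{j=0}^{x\wedge n}\binom{x}{j}\binom{n}{j}=K(x,n),
\]
the last equality being the defining formula \eqref{rksum2}. One may also argue matricially: by \eqref{direct21} the transform is the action of $L$, and by \eqref{eq:LLstar} one has $K=LL^*$, so $\epsilon_n^\blacktriangle=L L^*\delta_n=K\delta_n=K(\cdot,n)$.

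For \eqref{direct2}, the left-hand side is, since $\epsilon_n$ has finite support,
\[
\langle\epsilon_n,\epsilon_m\rangle_{\ell^2(\mathbb Z_+)}=\sum_{j=0}^{n\wedge m}\binom{n}{j}\binom{m}{j}=K(n,m),
\]
again by \eqref{rksum2}, while the right-hand side equals $K(m,n)$ by the reproducing property applied to $f=K(\cdot,m)$, and $K(m,n)=K(n,m)$ by symmetry of $K$; hence the two sides coincide. Equivalently, expanding $K(\cdot,n)=\sum_j e_j(n)e_j$ in the orthonormal basis $\{e_j\}$ one sees that its coordinate sequence is exactly $\epsilon_n$, so that \eqref{direct2} is Parseval's identity combined with \eqref{direct1}.

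I expect no real obstacle here: both identities reduce to the defining formula \eqref{rksum2} for $K$ and to the bookkeeping convention $\binom{n}{j}=0$ for $j>n$, which is precisely what makes the finite sums defining $f^\blacktriangle$ and the $\ell^2$-pairing truncate at $x\wedge n$ and $n\wedge m$ respectively. The only points worth stating explicitly are that $\epsilon_n\in\ell^2(\mathbb Z_+)$, so that both inner products in \eqref{direct2} make sense, and that we have deliberately avoided invoking Theorem \ref{tm:inverse1} at this stage, since \eqref{direct2} is itself one of the ingredients used to establish that theorem.
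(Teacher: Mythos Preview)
Your proof is correct and follows essentially the same approach as the paper: both identities are reduced to the defining formula \eqref{rksum2} for $K$, with the paper's proof being a slightly terser version of your main argument. Your additional matricial observation via $K=LL^*$ and the explicit invocation of the reproducing property for the right-hand side of \eqref{direct2} are helpful elaborations but not a different route.
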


\begin{proof}
By definition of the transform,
\[
(\epsilon_n^\blacktriangle)(x)=\sum_{j=0}^x \begin{pmatrix}x\\
j\end{pmatrix}\epsilon_n(j)= \sum_{j=0}^{x\wedge n}
\begin{pmatrix}x\\
j\end{pmatrix}\begin{pmatrix}n
\\j\end{pmatrix}=K(x,n),
\]
and so \eqref{direct1} holds. To prove \eqref{direct2}, it
suffices to notice that
\[
\begin{split}
\langle \epsilon_n,\epsilon_m\rangle_{\ell^2(\mathbb
Z_+)}=\sum_{j=0}^{n\wedge m}\begin{pmatrix}n\\
j\end{pmatrix}\begin{pmatrix}m\\
j\end{pmatrix}
=K(n,m).
\end{split}
\]
\end{proof}

\begin{theorem}
\label{tm:inverse1}
The map $f\mapsto f^\blacktriangle$ defines a
unitary mapping from $\ell_2(\mathbb Z_+)$ onto $\mathcal H(K)$,
and its inverse is given by either formulas
\[
g^\blacktriangledown=\sum_{n=0}^\infty a_n(-1)^n\delta_n\quad for
\quad g=\sum_{n=0}^\infty a_ne_n\in\mathcal H(K),
\]
or
\[
\label{tr:inverse}
g^\blacktriangledown(n)=\sum_{\ell=0}^n \begin{pmatrix}n
\\  \ell\end{pmatrix}g(\ell)(-1)^{\ell-n}.
\]
\end{theorem}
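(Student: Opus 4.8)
The plan is to read Theorem~\ref{tm:inverse1} as a repackaging of Proposition~\ref{eigenvalue} and Theorem~\ref{tm33}. The underlying fact is that $(\delta_n)_{n\in\mathbb Z_+}$ is the standard orthonormal basis of $\ell_2(\mathbb Z_+)$, while $(e_n)_{n\in\mathbb Z_+}$ is an orthonormal basis of $\mathcal H(K)$, by the description of $\mathcal H(K)$ given in the introduction (cf.\ \cite{schwartz}); and by the first identity of Proposition~\ref{eigenvalue} the map ${}^\blacktriangle$ carries the first basis onto the second.

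First I would check that $f\mapsto f^\blacktriangle$ is a well-defined isometry of $\ell_2(\mathbb Z_+)$ into $\mathcal H(K)$. Fix $f=(a_n)_{n}\in\ell_2(\mathbb Z_+)$. For each fixed $x$ the sum in \eqref{direct} is finite, and since $e_j(x)=\binom{x}{j}$ for $j\le x$ and $e_j(x)=0$ for $j>x$, it equals $\sum_{j=0}^{\infty}a_j e_j(x)$. Hence $f^\blacktriangle$ coincides pointwise with the element $\sum_j a_j e_j$ of $\mathcal H(K)$ --- the series converges in $\mathcal H(K)$ because $(e_j)$ is orthonormal and $(a_j)\in\ell_2(\mathbb Z_+)$, and point evaluations are continuous there --- whose $\mathcal H(K)$-norm is $(\sum_j a_j^2)^{1/2}=\|f\|_{\ell_2(\mathbb Z_+)}$. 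So ${}^\blacktriangle$ is isometric; it is onto because every $g\in\mathcal H(K)$ is of the form $g=\sum_n a_n e_n$ with $(a_n)_n\in\ell_2(\mathbb Z_+)$, whence $g=\bigl((a_n)_n\bigr)^\blacktriangle$. A surjective isometry between Hilbert spaces is unitary, which is the first assertion. (Alternatively one may invoke Theorem~\ref{tm11} and Lemma~\ref{la:direct} to see $f^\blacktriangle\in\mathcal H(K)$ and that inner products are preserved, but the orthonormal-basis route is shorter.)

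It remains to identify the inverse. The first displayed formula is immediate from $\delta_n^\blacktriangle=e_n$ and linearity: the ${}^\blacktriangle$-preimage of $g=\sum_n a_n e_n$ is $\sum_n a_n\delta_n$. For the coefficient-free formula \eqref{tr:inverse}, let $g=\sum_n a_n e_n\in\mathcal H(K)$ and combine the reproducing property $g(\ell)=\langle g,K(\cdot,\ell)\rangle_{\mathcal H(K)}$ with Theorem~\ref{tm33}; since $(-1)^{\ell-n}=(-1)^{n+\ell}$ and since, by \eqref{eq:inverse11}, the function $m\mapsto\sum_{\ell=0}^n(-1)^{n+\ell}\binom{n}{\ell}K(\ell,m)$ is exactly $e_n$,
\begin{align*}
\sum_{\ell=0}^n(-1)^{\ell-n}\binom{n}{\ell}g(\ell)
&=\Bigl\langle g,\ \sum_{\ell=0}^n(-1)^{n+\ell}\binom{n}{\ell}K(\cdot,\ell)\Bigr\rangle_{\mathcal H(K)}\\
&=\langle g,\ e_n\rangle_{\mathcal H(K)}=a_n .
\end{align*}
Thus the right-hand side of \eqref{tr:inverse} equals $a_n$, which both agrees with the first inverse formula and shows $(f^\blacktriangle)^\blacktriangledown=f$ for every $f=(a_n)\in\ell_2(\mathbb Z_+)$; together with $\bigl((a_n)_n\bigr)^\blacktriangle=\sum_n a_n e_n=g$ this proves ${}^\blacktriangledown$ is a two-sided inverse of ${}^\blacktriangle$.

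I do not expect a serious obstacle: the theorem is mostly bookkeeping on top of Proposition~\ref{eigenvalue} and Theorem~\ref{tm33}. The one point meriting care is the passage from finitely supported sequences to arbitrary elements of $\ell_2(\mathbb Z_+)$ --- ensuring $f^\blacktriangle$ really lies in $\mathcal H(K)$ with the correct norm for every square-summable $f$ --- which is dispatched by the orthonormal-basis description of $\mathcal H(K)$; and, in the last computation, being entitled to pull the finite linear combination through $\langle\cdot,\cdot\rangle_{\mathcal H(K)}$, which is immediate.
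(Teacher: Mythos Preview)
Your argument is correct and follows essentially the same route as the paper's proof: unitarity is obtained from the orthonormal-basis correspondence $\delta_n\mapsto e_n$ of Proposition~\ref{eigenvalue} (the paper first does this for finitely supported $f$ and passes to the limit, you go straight to general $f\in\ell_2$ via the description of $\mathcal H(K)$), and the explicit inverse formula \eqref{tr:inverse} is deduced from Theorem~\ref{tm33}/\eqref{eq:inverse11}, which you invoke through the reproducing property while the paper phrases it as $(K_m)^\blacktriangledown=\epsilon_m$. One remark: the factor $(-1)^n$ in the first displayed inverse formula of the statement is a misprint; your computation correctly yields $g^\blacktriangledown=\sum_n a_n\delta_n$, consistent with $g^\blacktriangledown(n)=a_n$ from \eqref{tr:inverse}.
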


We note that \eqref{tr:inverse} can be rewritten as
\[
g^\blacktriangledown=L(-1)g,
\]
where $L(\lambda)$ is defined by \eqref{eq1} below.\\

\begin{proof}[Proof of Theorem \ref{tm:inverse1}.]
We present two proofs. The first is based on Proposition \ref{eigenvalue}. Let
\[
f=\sum_{n=0}^Na_n\delta_n
\]
be a real valued function on $\mathbb Z_+$ with finite support. Then,
\[
f^\blacktriangle=\sum_{n=0}^Na_ne_n\,\,\in\mathcal H(K),
\]
and
\[
\|f^\blacktriangle\|^2_{\mathcal H(K)}=\|f\|^2_{\ell^2(\mathbb
Z_+)}=\sum_{n=0}^Na_n^2.
\]
The result follows by taking limits.\\

The second proof uses Lemma \ref{la:direct}.
The functions $\epsilon_0,\epsilon_1,\ldots$ span a dense set of
$\ell_2(\mathbb Z_+)$, and therefore \eqref{direct2} extends to
an isometry. The isometry is onto since the kernels span a dense
set of $\mathcal H(K)$. Therefore the transform
$f^\blacktriangle$ is unitary. To prove its inverse is given by
\eqref{tr:inverse} we note that \eqref{eq:inverse11} can be rewritten as
\[
(K^\blacktriangledown_m)(n)=\epsilon_m(n),\quad n\in\mathbb Z_+.
\]
\end{proof}

As a corollary of the above, the next results offer an answer to
the following question: {\sl Given a function $f$ on $\mathbb
Z_+$, when does $f$ belong to the reproducing kernel Hilbert space
$\mathcal H(K)$ with reproducing kernel \eqref{rksum2}?}

\begin{corollary}
\label{cor57}
Let $f\,\,:\,\,\mathbb Z_+\,\,\longrightarrow\,\, \mathbb R$ be a
function, and set
\[
\label{cor57eq}
f^\blacktriangledown(x)=\sum_{\ell=0}^x \begin{pmatrix}x
\\  \ell\end{pmatrix}f(\ell)(-1)^{\ell-x}.
\]
Then, $f\in\mathcal H(K)$ if and only if the transform
$x\mapsto f^\blacktriangledown(x)$ is in $\ell^2(\mathbb Z_+)$.
\end{corollary}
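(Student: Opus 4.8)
The plan is to deduce Corollary~\ref{cor57} directly from Theorem~\ref{tm:inverse1}. First I would observe that the expression \eqref{cor57eq} defining $f^\blacktriangledown$ is literally the formula \eqref{tr:inverse} for the inverse transform $g^\blacktriangledown = L(-1)g$, now applied to an \emph{arbitrary} function $f\colon\mathbb Z_+\to\mathbb R$ rather than only to elements $g\in\mathcal H(K)$. This makes sense because, just as with the $\blacktriangle$-transform in Definition~\ref{def51}, the sums in \eqref{cor57eq} are finite: for each fixed $x$ only the values $f(0),\dots,f(x)$ enter, so $f^\blacktriangledown$ is well defined for every $f\in\mathbb R^{\mathbb Z_+}$. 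Thus the operators $L(1)$ and $L(-1)$ are mutually inverse \emph{triangular} operators on the full space $\mathbb R^{\mathbb Z_+}$; Lemma~\ref{le:eq} (equivalently Theorem~\ref{tm33}) is exactly what guarantees $L(1)L(-1)=L(-1)L(1)=I$ at the level of these triangular matrices.

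Granting that, the argument is short. Suppose $f^\blacktriangledown\in\ell^2(\mathbb Z_+)$. Then by Theorem~\ref{tm:inverse1} the function $(f^\blacktriangledown)^\blacktriangle$ lies in $\mathcal H(K)$, since $\blacktriangle$ maps $\ell^2(\mathbb Z_+)$ unitarily onto $\mathcal H(K)$. But $(f^\blacktriangledown)^\blacktriangle = L(1)L(-1)f = f$ by the triangular inversion just discussed, so $f\in\mathcal H(K)$. Conversely, if $f\in\mathcal H(K)$, write $f=\sum_{n=0}^\infty a_n e_n$ with $(a_n)\in\ell^2(\mathbb Z_+)$; then $f = g^\blacktriangle$ where $g=\sum a_n(-1)^n\delta_n$, which is in $\ell^2(\mathbb Z_+)$ with $\|g\|^2=\sum a_n^2<\infty$. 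Applying Theorem~\ref{tm:inverse1}, $f^\blacktriangledown = (g^\blacktriangle)^\blacktriangledown = g \in\ell^2(\mathbb Z_+)$. (Here one must check that the inverse formula \eqref{tr:inverse}, stated for $g\in\mathcal H(K)$, really does return $g$; this is precisely the content of Theorem~\ref{tm:inverse1} together with the identification $g^\blacktriangledown = L(-1)g$ noted just after its statement.)

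The one point needing a little care — and the only real obstacle — is the interchange of the two maps on functions that are not a priori in $\ell^2$. Theorem~\ref{tm:inverse1} is phrased with $\blacktriangle$ starting from $\ell^2(\mathbb Z_+)$ and $\blacktriangledown$ starting from $\mathcal H(K)$; in the corollary we want to apply $\blacktriangledown$ to an arbitrary $f$ and then ask whether the \emph{result} is square-summable. The resolution is that both transforms are given by honest triangular matrices $L(1)$ and $L(-1)$ acting on $\mathbb R^{\mathbb Z_+}$, the identity $L(1)L(-1)=I$ is the purely combinatorial statement of Lemma~\ref{le:eq}, and all the sums involved are finite so there is no convergence issue in forming the compositions. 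Once this is said, the $\ell^2$-membership of $f^\blacktriangledown$ is equivalent, via the unitary $\blacktriangle$, to $\mathcal H(K)$-membership of $f=(f^\blacktriangledown)^\blacktriangle$, which is the assertion. I would present the proof essentially in the three sentences of the preceding paragraph, after a one-line remark recalling that $f^\blacktriangledown = L(-1)f$ is defined by finite sums for every $f$ and that $L(-1)$ and $L(1)$ are inverse to each other by Lemma~\ref{le:eq}.
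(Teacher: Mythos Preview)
Your proof is correct and rests on the same ingredients as the paper's: the unitarity of $\blacktriangle$ from Theorem~\ref{tm:inverse1} for one direction, and the combinatorial inversion of Lemma~\ref{le:eq} for the other. The difference is in how the converse is packaged. You argue directly at the level of the infinite triangular matrices: since $L(1)L(-1)=I$ holds on all of $\mathbb R^{\mathbb Z_+}$ by Lemma~\ref{le:eq} (all sums being finite), one has $(f^\blacktriangledown)^\blacktriangle=f$ for \emph{every} $f$, and then the unitarity of $\blacktriangle$ on $\ell^2(\mathbb Z_+)$ finishes. The paper instead forms $F=\sum_n f^\blacktriangledown(n)\,e_n\in\mathcal H(K)$, notes $\langle F,e_n\rangle_{\mathcal H(K)}=f^\blacktriangledown(n)$, and then appeals to the identity \eqref{100912} (i.e.\ Theorem~\ref{tm33}) together with the reproducing kernel property to deduce $F(x)=f(x)$ pointwise. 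Unwinding that argument, one sees it is again verifying $(f^\blacktriangledown)^\blacktriangle=f$, but routed through the ONB and the kernel rather than through the bare matrix identity. Your formulation is a bit more economical and makes explicit the point you flag in your last paragraph---that no convergence issues arise because the compositions are purely triangular---whereas the paper's route has the virtue of keeping everything inside $\mathcal H(K)$ and its inner product.
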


\begin{proof}
In Theorem \ref{tm:inverse1} we proved that if $f\in\mathcal
H(K)$ then its transform $f^\blacktriangledown$ belongs to
$\ell^2(\mathbb Z_+)$. In fact the transform is an isometric
isomorphism of $\mathcal H(K)$ onto $\ell^2(\mathbb Z_+)$.\\

We turn to the converse: Let $f$ be a fixed function such that
$f^\blacktriangledown\in\ell^2(\mathbb Z_+)$. Recall (see Theorem
\ref{tm33}) that
\[
\label{100912}
e_n(x)=\sum_{j=0}^n (-1)^{j+n}\begin{pmatrix}n\\
j\end{pmatrix}K_j(x).
\]
Since the functions $e_n$ form an orthonormal basis of $\mathcal
H(K)$, we conclude that
\[
F:=\sum_{n=0}^\infty f^\blacktriangledown (n)e_n\in\mathcal H(K),
\]
and moreover,
\[
\langle F,e_n\rangle_{\mathcal
H(K)}=f^\blacktriangledown(n),\quad\forall n\in\mathbb Z_+.
\label{bvd-voltaire}
\]
Using \eqref{100912} again, together with the reproducing kernel
property for $F$ we conclude that
\[
\nonumber f(x)=F(x),\quad\forall x\in\mathbb Z_+,
\]
and so $f\in\mathcal H(K)$ as claimed.
\end{proof}

\begin{definition}
For $f\,\,:\,\,\mathbb Z_+\,\,\longrightarrow\,\, \mathbb R$ we define the following
Hurwitz transformation:
\[
\label{eq412}
f\mapsto H(f)(z)=\sum_{x=0}^\infty
\frac{f(x)}{x!}z^x=e^z\sum_{n=0}^\infty\frac{f^\blacktriangledown(n)}{n!}z^n,
\]
where $z\in\mathbb C$ is a generating function variable.
\label{defhurw}
\end{definition}

That the two expressions for $H(f)$ coincide can be checked as follows. For $n=0,1,\ldots$
we have the convolution expression:
\begin{equation}
\frac{f^\blacktriangledown(n)z^n}{n!}=\sum_{x=0}^n\frac{f(x)z^x}{x!}\cdot\frac{(-1)^{n-x}z^{n-x}}{(n-x)!}.
\label{revise}
\end{equation}
The result follows from the theorem on the coefficients of a product of power series.\\

%\frac{1}{x!}z^x=e^z\left(\sum_{n=x}^\infty\frac{
%\]

We have the following corollary to Corollary  \ref{cor57}.

\begin{corollary}
Let $f\in\mathcal H(K)$,and consider $f^\blacktriangledown\in\ell^2(\mathbb Z_+)$,
see \eqref{cor57eq}. Then the following hold for the respective Hurwitz-transforms:
\begin{equation}
H(f)(z)=e^zH(f^\blacktriangledown)(z).
\end{equation}
In particular, $H(f)$ is an entire function.
\label{cor??}
\end{corollary}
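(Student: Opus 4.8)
The plan is to combine Definition \ref{defhurw} with the relation
$H(f)(z)=e^z\sum_{n=0}^\infty \frac{f^\blacktriangledown(n)}{n!}z^n$
supplied there, and simply recognize the second factor as $H(f^\blacktriangledown)(z)$.
Concretely, I would first note that, since $f\in\mathcal H(K)$, Corollary \ref{cor57}
guarantees $f^\blacktriangledown\in\ell^2(\mathbb Z_+)$, so $g:=f^\blacktriangledown$
is itself a legitimate function $\mathbb Z_+\to\mathbb R$ to which the Hurwitz
transformation of Definition \ref{defhurw} applies. Unwinding that definition for $g$,
$H(g)(z)=\sum_{x=0}^\infty \frac{g(x)}{x!}z^x=\sum_{n=0}^\infty
\frac{f^\blacktriangledown(n)}{n!}z^n$, which is exactly the series appearing on the
right-hand side of the displayed identity in Definition \ref{defhurw}. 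Substituting,
$H(f)(z)=e^z H(f^\blacktriangledown)(z)$, which is the claimed identity.

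For the ``in particular'' clause I would argue that $H(f^\blacktriangledown)$ is entire
and then multiply by the entire function $e^z$. The entirety of $H(f^\blacktriangledown)$
follows from the $\ell^2$ bound: since $(f^\blacktriangledown(n))_n\in\ell^2(\mathbb Z_+)$,
the coefficients $f^\blacktriangledown(n)$ are in particular bounded, say
$|f^\blacktriangledown(n)|\le M$ for all $n$, so the power series
$\sum_{n\ge 0}\frac{f^\blacktriangledown(n)}{n!}z^n$ is dominated termwise by
$M\sum_{n\ge 0}\frac{|z|^n}{n!}=Me^{|z|}$, hence converges absolutely and locally
uniformly on all of $\mathbb C$ and defines an entire function. (One could equally invoke
that $\sum|a_n|^2<\infty$ forces $\limsup_n|a_n/n!|^{1/n}=0$ via $n!^{1/n}\to\infty$.)
A product of two entire functions is entire, so $H(f)=e^z\cdot H(f^\blacktriangledown)$
is entire.

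I do not expect a serious obstacle here: the content of the corollary is essentially a
bookkeeping consequence of the generating-function identity already recorded in
Definition \ref{defhurw}, together with the membership $f^\blacktriangledown\in\ell^2$
from Corollary \ref{cor57}. The only point requiring a line of care is justifying that
the rearrangement in Definition \ref{defhurw} (splitting off $e^z$) and the term-by-term
domination are legitimate, i.e. that all series in sight converge absolutely on $\mathbb C$;
this is immediate from the crude bound $|f^\blacktriangledown(n)|\le M$ noted above, which
makes every manipulation an absolutely convergent one. Thus the proof is short: invoke
Corollary \ref{cor57} for $f^\blacktriangledown\in\ell^2(\mathbb Z_+)$, read off
$H(f)(z)=e^zH(f^\blacktriangledown)(z)$ from Definition \ref{defhurw}, and observe that
$H(f^\blacktriangledown)$ — being a power series with $\ell^2$ (a fortiori bounded)
coefficients divided by $n!$ — is entire, whence so is $H(f)$.
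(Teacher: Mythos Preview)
Your argument for the ``in particular'' clause is fine and matches the paper's: you bound the $\ell^2$ sequence $f^\blacktriangledown$ and conclude $H(f^\blacktriangledown)$ is entire (the paper phrases this as Proposition~\ref{prop:republique}, using Cauchy--Schwarz instead of the sup bound, but the content is the same), then multiply by $e^z$.

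The issue is with the main identity. You propose to ``read off'' $H(f)(z)=e^z\sum_n \frac{f^\blacktriangledown(n)}{n!}z^n$ from Definition~\ref{defhurw}. But that displayed equality in the Definition is precisely the assertion the Corollary is there to justify; it is stated in \eqref{eq412} without proof, and the proof of Corollary~\ref{cor??} is where the paper actually establishes it. So invoking it as input is circular. You do flag that ``the rearrangement \ldots splitting off $e^z$'' needs care and say absolute convergence handles it, but you never say \emph{which} double sum is being rearranged or why its terms are the right ones; the bound $|f^\blacktriangledown(n)|\le M$ only controls the series $\sum_n \frac{f^\blacktriangledown(n)}{n!}z^n$, not the passage from $\sum_x \frac{f(x)}{x!}z^x$ to it.

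The paper fills this gap by an explicit Cauchy-product computation. From the inversion $(f^\blacktriangledown)^\blacktriangle=f$ one has $f(n)=\sum_{x=0}^n\binom{n}{x}f^\blacktriangledown(x)$, i.e.
\[
\frac{f(n)}{n!}=\sum_{x=0}^n\frac{f^\blacktriangledown(x)}{x!}\cdot\frac{1}{(n-x)!},
\]
which exhibits the coefficient sequence of $H(f)$ as the convolution of those of $H(f^\blacktriangledown)$ and of $e^z$. Since $H(f^\blacktriangledown)$ is entire (your argument or Proposition~\ref{prop:republique}) and $e^z$ is entire, the product of the two power series has exactly this Cauchy product as its coefficient sequence, giving $H(f)=e^z\,H(f^\blacktriangledown)$ and, in particular, that $H(f)$ is entire. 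If you insert this one line---the convolution identity for $\frac{f(n)}{n!}$ coming from $f=(f^\blacktriangledown)^\blacktriangle$---your sketch becomes a complete proof, and in fact coincides with the paper's.
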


The following proposition will be used in Corollary \ref{cor??}.

\begin{proposition}
Assume $(f(x))_{x\in\mathbb Z_+}\in\ell^2(\mathbb Z_+)$. Then the
Hurwitz transform $H(f)$ is an entire function of
$z$.
\label{prop:republique}
\end{proposition}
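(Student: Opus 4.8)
The plan is to estimate the growth of the coefficients $\frac{f(x)}{x!}$ and show that the power series defining $H(f)(z)=\sum_{x=0}^\infty \frac{f(x)}{x!}z^x$ has infinite radius of convergence. First I would observe that since $(f(x))_{x\in\mathbb Z_+}\in\ell^2(\mathbb Z_+)$, the sequence $(f(x))$ is in particular bounded: there is a constant $C>0$ with $|f(x)|\le C$ for all $x\in\mathbb Z_+$ (indeed $|f(x)|\le \|f\|_{\ell^2(\mathbb Z_+)}$). This is the only input from the $\ell^2$ hypothesis that is needed; boundedness already suffices.

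Next I would apply the root test (or ratio test) to the series $\sum_x \frac{f(x)}{x!}z^x$. For fixed $z\in\mathbb C$, the general term is bounded in modulus by $C\,\frac{|z|^x}{x!}$, and $\sum_{x=0}^\infty \frac{|z|^x}{x!}=e^{|z|}<\infty$. Hence the series converges absolutely, uniformly on compact subsets of $\mathbb C$ (since on $|z|\le R$ the tail is dominated by $C\sum_{x\ge N}\frac{R^x}{x!}\to 0$). By the Weierstrass $M$-test the sum is the locally uniform limit of polynomials, hence analytic on all of $\mathbb C$, i.e.\ $H(f)$ is entire. Equivalently, $\limsup_{x\to\infty}\bigl(|f(x)|/x!\bigr)^{1/x}=0$ because $(x!)^{1/x}\to\infty$ and $|f(x)|^{1/x}$ is bounded, so the radius of convergence is $+\infty$.

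The argument has essentially no obstacle; the only thing to be careful about is that the hypothesis is used merely through boundedness, and one should phrase the domination cleanly so that the interchange with the analytic limit is justified (Weierstrass convergence theorem for series of analytic functions, or Morera, applied on each disk $|z|<R$). I would close by noting that this also makes transparent the identity in Definition \ref{defhurw} and Corollary \ref{cor??}: since $f^\blacktriangledown\in\ell^2(\mathbb Z_+)$ whenever $f\in\mathcal H(K)$ by Corollary \ref{cor57}, the same estimate applied to $f^\blacktriangledown$ shows $H(f^\blacktriangledown)$ is entire, and multiplying the entire function $e^z$ by it keeps it entire, which is exactly the content of Corollary \ref{cor??}.
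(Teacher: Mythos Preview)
Your proof is correct and follows essentially the same strategy as the paper: both arguments show that the power series $\sum_{x}\frac{f(x)}{x!}z^x$ converges absolutely for every $z\in\mathbb C$, hence defines an entire function. The only cosmetic difference is that the paper pairs $f\in\ell^2(\mathbb Z_+)$ with $(z^x/x!)_{x}\in\ell^2(\mathbb Z_+)$ via the Cauchy--Schwarz inequality, whereas you extract from the $\ell^2$ hypothesis the weaker fact that $(f(x))$ is bounded and then compare with the exponential series; your observation that boundedness already suffices is correct and worth keeping.
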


\begin{proof}
Indeed, for any $z\in\mathbb C$, the sequence
\[
\nonumber
\left(\dfrac{z^x}{x!}\right)_{x\in\mathbb Z_+}\in\ell^2(\mathbb
Z_+).
\]
By the Cauchy-Schwarz inequality, we see that the series
\eqref{eq412} converges then absolutely for all complex numbers $z$.
\end{proof}

\begin{proof} By Corollary \ref{cor57}, $f\in\mathcal H(K)$ if and
only if there exists $(a_n)_{n\in\mathbb Z_+}\in\ell^2(\mathbb Z_+)$ such that
\[
f=\sum_{n=0}^\infty a_ne_n,
\]
with
\[
a_n=f^\blacktriangledown(n)
=\sum_{x=0}^n(-1)^{n+x}\begin{pmatrix}n\\ x\end{pmatrix}f(x).
\]
In a way similar to \eqref{revise} we have:
\[
\frac{f^\blacktriangledown(n)}{n!}=\sum_{x=0}^n\frac{f(x)}{x!}
\frac{(-1)^{n-x}}{(n-x)!}.
\]
By Proposition \ref{prop:republique}, the function $H(f^\blacktriangledown)$
is entire. Since
\[
\frac{f(n)}{n!}=\sum_{x=0}^n\frac{f^\blacktriangledown(x)}{x!}\frac{1}{(n-x)!},
\]
classical results on power series and convolution implies that
\[
H(f)(z)=(H(f^\blacktriangledown)(z))e^z,
\]
and $H(f)$ is in particular an entire function.
\end{proof}

\begin{proposition}
Let $f\,\,:\,\,\mathbb Z_+\,\,\longrightarrow\,\, \mathbb R$, and assume
that $f\in\mathcal H(K)$.
Then, for every $\epsilon>0$ there exists $M>0$  such that
\begin{equation}
\label{ineq123}
|f(n)|\le Mn!\epsilon^n,\quad n\in\mathbb Z_+.
\end{equation}
\end{proposition}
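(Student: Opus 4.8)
The plan is to read off the estimate from Corollary~\ref{cor??}, which tells us that for $f\in\mathcal H(K)$ the Hurwitz transform
\[
H(f)(z)=\sum_{x=0}^\infty\frac{f(x)}{x!}\,z^x
\]
is an entire function. The only analytic input needed beyond this is the elementary fact that the Taylor coefficients of an entire function decay faster than any geometric sequence: if $\sum_x c_x z^x$ has infinite radius of convergence, then $|c_x|\,r^x\to 0$ for every $r>0$.

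Concretely, I would fix $\epsilon>0$ and set $r=1/\epsilon$. Applying the remark above to the coefficients $c_x=f(x)/x!$ of $H(f)$, the sequence $\bigl(|f(x)|/x!\bigr)r^x$ is bounded, say by $M$. Rearranging yields $|f(x)|\le M\,x!\,r^{-x}=M\,x!\,\epsilon^x$ for all $x\in\mathbb Z_+$, which is exactly \eqref{ineq123}.

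An alternative route avoids the Hurwitz transform altogether. By Theorem~\ref{tm:inverse1} (see also Corollary~\ref{cor57}) membership $f\in\mathcal H(K)$ means $f=\sum_{n\ge 0}f^{\blacktriangledown}(n)\,e_n$ with $f^{\blacktriangledown}\in\ell^2(\mathbb Z_+)$; since $e_n(x)=\binom{x}{n}$ this is the statement $f(n)=\sum_{x=0}^n\binom{n}{x}f^{\blacktriangledown}(x)$. Because $\ell^2(\mathbb Z_+)\subseteq\ell^\infty(\mathbb Z_+)$, we get $|f(n)|\le\|f^{\blacktriangledown}\|_{\ell^2}\sum_{x=0}^n\binom{n}{x}=\|f^{\blacktriangledown}\|_{\ell^2}\,2^n$. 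This is a genuinely stronger bound; to recover \eqref{ineq123} from it, observe that for fixed $\epsilon>0$ the ratio $2^n/(n!\,\epsilon^n)=(2/\epsilon)^n/n!\to 0$, hence is bounded by some constant $C_\epsilon$, and then $M:=C_\epsilon\,\|f^{\blacktriangledown}\|_{\ell^2}$ does the job.

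I do not expect a real obstacle here; the whole content is packaging ``$H(f)$ entire'' (equivalently, ``$f^{\blacktriangledown}\in\ell^2$'') into the stated factorial-times-geometric growth bound, and the two arguments above are just two ways of doing so. The one mild point to keep in mind is that \eqref{ineq123} must hold for \emph{every} $\epsilon>0$ with $M$ allowed to depend on $\epsilon$, which is automatic in both arguments since $r=1/\epsilon$ (resp.\ $C_\epsilon$) may be chosen after $\epsilon$ is given.
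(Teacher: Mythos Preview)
Your primary argument is essentially the paper's own proof: the paper too invokes the preceding corollary to conclude that $H(f)$ is entire, uses that the Taylor coefficients $f(n)/n!$ then satisfy $\bigl(|f(n)|/n!\bigr)r^n\le M$ for every $r>0$, and takes $r=1/\epsilon$.

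Your alternative route via $|f(n)|\le\|f^{\blacktriangledown}\|_{\ell^2}\,2^n$ is not in the paper and is genuinely different: it bypasses the Hurwitz transform and the entireness argument altogether, working directly from the expansion $f=\sum_n f^{\blacktriangledown}(n)e_n$. It is more elementary and in fact yields a strictly stronger, explicit estimate (exponential rather than factorial growth), from which \eqref{ineq123} follows by the trivial comparison $2^n=o(n!\,\epsilon^n)$. The paper's approach, by contrast, packages the same information through analytic function theory, which fits the surrounding narrative about $H(f)$ but gives a less sharp conclusion.
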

\begin{proof}
From the preceding proposition we know that $H(f)$ is an entire function.
Thus for every $r>0$ there exists an $M>0$ such
that
\[
\nonumber
r^n\frac{|f(n)|}{n!}\le M,\quad n\in\mathbb Z_+.
\]
The result follows with $r=1/\epsilon$.
\end{proof}

We note that \eqref{ineq123} is not sufficient to guarantee that
$f\in\mathcal H(K)$. Indeed, the series
\[
\nonumber
\sum_{n=0}^\infty \frac{f^\blacktriangledown(n)}{n!}z^n
\]
may define an entire function even when $(f^\blacktriangledown(n))_{n\in\mathbb Z_+}
\not\in\ell^2(\mathbb Z_+)$.
For example consider the function
$g_a$ given by:
\[
\label{eqga1}
g_a(y)=\frac{a^y}{y!},\quad y\in\mathbb Z_+.
\]
Then
\[
\nonumber
\begin{split}
g_a^\blacktriangledown(n)&=(-1)^n\sum_{x=0}^n\begin{pmatrix}n\\ x
\end{pmatrix}\frac{(-a)^x}{x!}\\
&=(-1)^n\sum_{x=0}^n(-a)^x\frac{n(n-1)\cdots (n-x+1)}{(x!)^2},
\end{split}
\]
and so $(g_a^\blacktriangledown(n))_{n\in\mathbb Z_+}\not\in\ell^2(\mathbb Z_+)$.
On the other hand,
\[
\nonumber
H(g_a^\blacktriangledown)(z)=e^{-z}(H(g_a)(z))=e^{-z}\sum_{n=0}^\infty
\frac{(az)^n}{(n!)^2}
\]
is entire. But $g_a\not\in\mathcal H(K)$ since
$(g_a^\blacktriangledown(n))_{n\in\mathbb Z_+}\not\in\ell^2(\mathbb Z_+)$.

\begin{corollary}
Let $f\,\,:\,\,\mathbb Z_+\,\,\longrightarrow\,\, \mathbb R$ be a
function, and assume that $f\in\mathcal H(K)$. Then there exists
$M<\infty$ such that
\[
\label{sydney}
|f(x)|^2\le M\begin{pmatrix}2x\\ x\end{pmatrix},\quad\forall
x\in\mathbb Z_+.
\]
(In other words, condition \eqref{sydney} is necessary for $f$ to
be in $\mathcal H(K)$).
\end{corollary}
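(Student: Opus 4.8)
The plan is to obtain \eqref{sydney} as a direct quantitative consequence of membership in the reproducing kernel Hilbert space $\mathcal H(K)$, combined with the diagonal evaluation of the kernel recorded in Lemma \ref{newlemma}.

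First I would use the reproducing property: since $f\in\mathcal H(K)$, for every $x\in\mathbb Z_+$ one has $f(x)=\langle f,K(\cdot,x)\rangle_{\mathcal H(K)}$. Applying the Cauchy--Schwarz inequality in $\mathcal H(K)$ then gives
\[
|f(x)|\le\|f\|_{\mathcal H(K)}\,\|K(\cdot,x)\|_{\mathcal H(K)}=\|f\|_{\mathcal H(K)}\,K(x,x)^{1/2}.
\]
By Lemma \ref{newlemma}, $K(x,x)=\begin{pmatrix}2x\\x\end{pmatrix}$, so squaring yields $|f(x)|^2\le\|f\|^2_{\mathcal H(K)}\begin{pmatrix}2x\\x\end{pmatrix}$, which is exactly \eqref{sydney} with the explicit constant $M=\|f\|^2_{\mathcal H(K)}$.

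I do not expect a genuine obstacle here; the only point worth stressing is that the constant is uniform in $x$, which is automatic because $\|f\|_{\mathcal H(K)}$ does not depend on $x$. As an alternative route avoiding the reproducing property, I could instead expand $f=\sum_{n\ge0}a_ne_n$ with $(a_n)_{n}\in\ell^2(\mathbb Z_+)$ (Corollary \ref{cor57}), observe that for fixed $x$ only the terms $n\le x$ contribute since $e_n(x)=0$ for $n>x$, and apply Cauchy--Schwarz to the finite sum $f(x)=\sum_{n=0}^x a_ne_n(x)$ to get $|f(x)|^2\le\big(\sum_{n\ge0}a_n^2\big)\big(\sum_{n=0}^x e_n(x)^2\big)$; the second factor is precisely $K(x,x)=\begin{pmatrix}2x\\x\end{pmatrix}$ again by Lemma \ref{newlemma}, giving the same bound.
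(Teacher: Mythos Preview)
Your proposal is correct and follows essentially the same route as the paper: apply Cauchy--Schwarz in $\mathcal H(K)$ to the reproducing identity $f(x)=\langle f,K(\cdot,x)\rangle$ and then invoke Lemma~\ref{newlemma} to evaluate $K(x,x)=\begin{pmatrix}2x\\x\end{pmatrix}$, yielding $M=\|f\|_{\mathcal H(K)}^2$. The alternative expansion argument you sketch is also valid and amounts to the same Cauchy--Schwarz estimate written out in coordinates.
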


\begin{proof}
Using the Cauchy-Schwarz inequality we get
\[
\begin{split}
\nonumber |f(x)|^2&\le \|f\|^2_{\mathcal H(K)}\cdot K(x,x)\\
&= \|f\|^2_{\mathcal H(K)}\cdot\begin{pmatrix}2x\\x\end{pmatrix}
\end{split}
\]
See \eqref{formula1} for the latter.
\end{proof}

\begin{remark}{\rm Let $a\in\mathbb R\setminus\left\{0\right\}$,
and let $g_a$ be given by \eqref{eqga1}, that is
\[
g_a(x)=\frac{a^x}{x!}, \quad x\in\mathbb Z_+,
\]
with as usual $0!=1$.
The condition \eqref{sydney} is satisfied, but
$g_a^\blacktriangledown$ is not in $\ell_2(\mathcal Z_+)$. We
conclude that condition \eqref{sydney} is not sufficient and
$g_a\not\in\mathcal H(K)$.}
\end{remark}

We now define an isometric isomorphism between the reproducing kernel Hilbert space
$\mathcal H(K)$ and the reproducing kernel Hilbert space $\mathcal H(K_2)$
of entire functions associated to the positive definite function
\begin{equation}
K_2(\zeta,z)=\sum_{n=0}^\infty\frac{\zeta^nz^{*n}}{(n!)^2},\quad \zeta,z\in\mathbb C.
\end{equation}
An orthonormal basis of the space $\mathcal H(K_2)$ is given by the functions
\[
\label{onbk2}
\zeta\mapsto \frac{\zeta^n}{n!},\quad n=0,1,\ldots
\]
For $f$ a real- or complex- valued function defined on $\mathbb Z_+$ we define the map
\[
(\mathscr H(f))(\zeta)=e^{-\zeta}(H(f))(\zeta).
\]
\begin{theorem}
\label{tm512}
The map $\mathscr H$ is unitary from $\mathcal H(K)$ onto $\mathcal H(K_2)$, i.e,
is isometric and onto.
\end{theorem}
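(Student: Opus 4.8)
The plan is to read off the action of $\mathscr H$ from the explicit series form of the Hurwitz transform, reducing the statement to a matching of orthonormal bases. For $f\in\mathcal H(K)$, Definition \ref{defhurw} gives
\[
\mathscr H(f)(\zeta)=e^{-\zeta}H(f)(\zeta)=\sum_{n=0}^\infty\frac{f^\blacktriangledown(n)}{n!}\zeta^n=\sum_{n=0}^\infty f^\blacktriangledown(n)\cdot\frac{\zeta^n}{n!},
\]
and by Corollary \ref{cor57} the coefficient sequence $(f^\blacktriangledown(n))_{n\in\mathbb Z_+}$ lies in $\ell^2(\mathbb Z_+)$. Since by \eqref{onbk2} the functions $\zeta\mapsto\zeta^n/n!$ form an orthonormal basis of $\mathcal H(K_2)$, the last series converges in $\mathcal H(K_2)$, so $\mathscr H(f)\in\mathcal H(K_2)$ and
\[
\|\mathscr H(f)\|_{\mathcal H(K_2)}^2=\sum_{n=0}^\infty|f^\blacktriangledown(n)|^2=\|f^\blacktriangledown\|_{\ell^2(\mathbb Z_+)}^2 .
\]
By Theorem \ref{tm:inverse1} the map $f\mapsto f^\blacktriangledown$ is unitary from $\mathcal H(K)$ onto $\ell^2(\mathbb Z_+)$, so the right-hand side equals $\|f\|_{\mathcal H(K)}^2$; hence $\mathscr H$ is an isometry of $\mathcal H(K)$ into $\mathcal H(K_2)$.

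For surjectivity I would take $g\in\mathcal H(K_2)$, expand it in the basis \eqref{onbk2} as $g(\zeta)=\sum_{n=0}^\infty c_n\,\zeta^n/n!$ with $(c_n)_{n\in\mathbb Z_+}\in\ell^2(\mathbb Z_+)$, and let $f$ be the $\blacktriangle$-transform of the function $n\mapsto c_n$. Then $f\in\mathcal H(K)$ and $f^\blacktriangledown=(c_n)_{n\in\mathbb Z_+}$ because $\blacktriangledown$ inverts $\blacktriangle$ (Theorem \ref{tm:inverse1}); substituting into the first display gives $\mathscr H(f)=g$. Together with the previous paragraph, $\mathscr H$ is unitary.

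The conceptual content behind this is that $\mathscr H$ sends the orthonormal basis $\{e_n\}$ of $\mathcal H(K)$ (from \eqref{eqen}) onto the orthonormal basis $\{\zeta\mapsto\zeta^n/n!\}$ of $\mathcal H(K_2)$: since $\binom{x}{n}/x!=1/(n!(x-n)!)$ for $x\ge n$, one gets $H(e_n)(z)=\sum_{x\ge n}\frac{z^x}{n!(x-n)!}=\frac{z^n}{n!}e^z$, hence $\mathscr H(e_n)(\zeta)=\zeta^n/n!$. A linear map between Hilbert spaces carrying one orthonormal basis to another is unitary as soon as it is globally well defined and continuous, which is what the first two paragraphs provide.

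\textbf{Main obstacle.} The only point requiring genuine care is the identification of $\mathscr H(f)=e^{-\zeta}H(f)(\zeta)$ with the $\mathcal H(K_2)$-convergent series $\sum_n f^\blacktriangledown(n)\,\zeta^n/n!$: one needs both that this series converges in $\mathcal H(K_2)$ (which is exactly $f^\blacktriangledown\in\ell^2(\mathbb Z_+)$, Corollary \ref{cor57}) and that its sum agrees pointwise with $e^{-\zeta}H(f)(\zeta)$. The latter rests on the growth bound $|f(x)|\le\|f\|_{\mathcal H(K)}\binom{2x}{x}^{1/2}\le\|f\|_{\mathcal H(K)}\,2^x$ (Cauchy--Schwarz together with \eqref{formula1} and $\binom{2x}{x}\le 4^x$), which guarantees that the defining series $\sum_x\frac{f(x)}{x!}z^x$ for $H(f)$ converges absolutely for every $z$ and may be manipulated as a power series, so that dividing by $e^\zeta$ and regrouping coefficients is legitimate. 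Everything else is routine.
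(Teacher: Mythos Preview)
Your argument is correct and follows essentially the same route as the paper: both identify $\mathscr H(f)$ with the series $\sum_n f^\blacktriangledown(n)\,\zeta^n/n!$ via Definition~\ref{defhurw} and Corollary~\ref{cor57}, and both compute $\mathscr H(e_n)=\zeta^n/n!$ to match orthonormal bases. The only cosmetic difference is that the paper concludes unitarity directly from the ONB-to-ONB correspondence, whereas you factor explicitly through the unitary $f\mapsto f^\blacktriangledown$ of Theorem~\ref{tm:inverse1}; your added discussion of the pointwise identification is handled in the paper by Corollary~\ref{cor??}.
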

\begin{proof}
Corollary \ref{cor57} implies that $\mathscr H$ maps $\mathcal H(K)$ into
$\mathcal H(K_2)$. Indeed $f$ belongs to $\mathcal H(K)$ if and only if
the transform $f^\blacktriangledown$ belongs to $\ell^2(\mathbb Z_+)$. But the
function
\[
(\mathscr H(f))(\zeta)=e^{-\zeta}\sum_{n=0}^\infty f(x)\frac{\zeta^x}{x!}=
\sum_{n=0}^\infty f^\blacktriangledown(n)\frac{\zeta^n}{n!}
\]
belongs to $\mathcal H(K_2)$ since  the functions \eqref{onbk2} form an orthonormal basis of $\mathcal H(K_2)$ and
$f^\blacktriangledown\in\ell^2(\mathbb Z_+)$.\\

To finish the proof, we show that the map $\mathscr H$ sends an orthonormal basis of $\mathcal H(K)$
onto an  orthonormal basis of $\mathcal H(K_2)$. We claim that
\[
(\mathscr H(e_n))(\zeta)=\frac{\zeta^n}{n!},\quad n=0,1,\ldots
\]
Indeed
\[
\begin{split}
(\mathscr H(e_n))(\zeta)&=e^{-\zeta}\sum_{x=n}^\infty e_n(x)\frac{\zeta^x}{x!}\\
&=e^{-\zeta}\frac{\zeta^n}{n!}\sum_{x=n}^\infty\frac{\zeta^{x-n}}{(x-n)!}\\
&=\frac{\zeta^n}{n!}
\end{split}
\]
concluding the proof.
\end{proof}

The following theorem establishes a co-product
for the binomial functions $e_n$ in \eqref{eqen}.
It follows from our analysis that the functions
$\left\{ e_n\, |\, n \in \mathbb N_0\right\}$  generate a hypergroup $HG$; in the sense of
Lasser et al, see \cite{MR2606478}. Here we refer to our formula \eqref{344}
below for the co-product in $HG$. And in \eqref{groningen2}
we give an explicit formula for the coefficients defining the co-product.

\begin{theorem}
\label{tot-le-matin}
\mbox{}\\
$(1)$ It holds that
\[
e_n^\blacktriangledown(x)=\delta_{n,x}.
\]
$(2)$ We have
\begin{equation}
\label{344}
e_n\cdot e_m=\sum_{k=m\vee n}^{m+n}(e_m\cdot
e_n)^\blacktriangledown(k)e_k,
\end{equation}
where
\[
\label{groningen2}
(e_m\cdot
e_n)^\blacktriangledown(k)=(-1)^{n+k}
\begin{pmatrix}k\\ n\end{pmatrix}
\left(\sum_{\ell=0}^{k-n}
\begin{pmatrix}k-n\\
\ell\end{pmatrix}\begin{pmatrix}\ell+n\\
m\end{pmatrix} \right).
\]
\label{groningen}
\end{theorem}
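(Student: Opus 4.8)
The plan is to read $e_n\cdot e_m$ as the \emph{pointwise} product of the two functions on $\mathbb Z_+$, i.e. $(e_n\cdot e_m)(x)=\binom{x}{n}\binom{x}{m}$, and then to observe that $(1)$ is immediate from Theorem~\ref{tm:inverse1} while $(2)$ is nothing more than the expansion of the function $e_n\cdot e_m$ in the orthonormal basis $\{e_k\}$ of $\mathcal H(K)$. For $(1)$: by Proposition~\ref{eigenvalue} we have $\delta_n^{\blacktriangle}=e_n$, and by Theorem~\ref{tm:inverse1} the transform $\blacktriangledown$ is the inverse of $\blacktriangle$, so $e_n^{\blacktriangledown}=(\delta_n^{\blacktriangle})^{\blacktriangledown}=\delta_n$, that is, $e_n^{\blacktriangledown}(x)=\delta_{n,x}$. (Alternatively, substitute $f=e_n$ in \eqref{tr:inverse} and apply Lemma~\ref{le:eq} to the sum $\sum_{\ell=n}^{x}(-1)^{\ell-x}\binom{x}{\ell}\binom{\ell}{n}$.)

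For $(2)$ the first point is membership. Each $e_k$ is the restriction to $\mathbb Z_+$ of the polynomial $x^{[k]}/k!$, which has degree exactly $k$; since restriction to $\mathbb Z_+$ is injective on polynomials (a nonzero polynomial has finitely many zeros), $e_0,\dots,e_{m+n}$ are a basis of the space of polynomial functions on $\mathbb Z_+$ of degree $\le m+n$. The product $e_n\cdot e_m$ is the restriction of $x^{[n]}x^{[m]}/(n!\,m!)$, a polynomial of degree $m+n$, hence $e_n\cdot e_m=\sum_{k=0}^{m+n}c_k e_k\in\mathcal H(K)$ with real $c_k$. To identify the $c_k$: $\{e_k\}$ is orthonormal in $\mathcal H(K)$ and, by Theorem~\ref{tm:inverse1} together with $\delta_k^{\blacktriangle}=e_k$, one has $\langle g,e_k\rangle_{\mathcal H(K)}=g^{\blacktriangledown}(k)$ for every $g\in\mathcal H(K)$; therefore $c_k=(e_n\cdot e_m)^{\blacktriangledown}(k)$, which already gives \eqref{344} with the sum running over $0\le k\le m+n$.

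It remains to compute $(e_n\cdot e_m)^{\blacktriangledown}(k)$ and to locate its support. From \eqref{cor57eq},
\[
(e_n\cdot e_m)^{\blacktriangledown}(k)=\sum_{\ell=0}^{k}(-1)^{\ell-k}\binom{k}{\ell}\binom{\ell}{n}\binom{\ell}{m}.
\]
Using the factorization $\binom{k}{\ell}\binom{\ell}{n}=\binom{k}{n}\binom{k-n}{\ell-n}$ (established inside the proof of Lemma~\ref{le:eq}), pulling $\binom{k}{n}$ out of the sum, reindexing $\ell\mapsto\ell+n$, and collecting the powers of $-1$, this reduces to \eqref{groningen2}. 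For the support: in that last sum the index $\ell+n$ runs over $n,\dots,k$, so $\binom{\ell+n}{m}=0$ throughout when $k<m$, while $\binom{k}{n}=0$ when $k<n$; hence $(e_n\cdot e_m)^{\blacktriangledown}(k)=0$ for $k<m\vee n$. On the other side, $\ell\mapsto\binom{\ell+n}{m}$ is a polynomial of degree $m$, so the alternating sum $\sum_{\ell}(-1)^{\ell}\binom{k-n}{\ell}\binom{\ell+n}{m}$ is a $(k-n)$-th finite difference and vanishes for $k-n>m$, i.e. for $k>m+n$. Together these trim the range in \eqref{344} to $m\vee n\le k\le m+n$.

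The combinatorial step in the last paragraph is routine once Lemma~\ref{le:eq} and its binomial factorization are at hand; the only care needed is the bookkeeping of the signs $(-1)^{\ell-k}$ under the reindexing, and the identification of the correct index on which the finite-difference vanishing is applied. The step I would single out is not a difficulty but the observation that makes the whole statement meaningful: $e_n\cdot e_m$, being a polynomial of degree $m+n$, automatically lies in $\mathcal H(K)$ and has a \emph{finite} expansion in the $e_k$, so that \eqref{344} is a genuine (finite-sum) co-product; once this is in place, $(1)$ and $(2)$ both follow from Theorem~\ref{tm:inverse1} and the computation above.
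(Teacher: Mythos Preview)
Your argument is correct and follows the same route as the paper's proof: part~(1) via the inverse relation between $\blacktriangle$ and $\blacktriangledown$ (the paper phrases it as a rewriting of the second identity in \eqref{eqen1}), and part~(2) via the factorization $\binom{k}{\ell}\binom{\ell}{n}=\binom{k}{n}\binom{k-n}{\ell-n}$ followed by the shift $\ell\mapsto\ell+n$. You supply justifications the paper omits (membership of $e_n\cdot e_m$ in $\mathcal H(K)$ as a polynomial of degree $m+n$, and the support bounds $m\vee n\le k\le m+n$); note also that your computation, like the paper's own derivation, produces a factor $(-1)^{\ell}$ inside the inner sum which the printed formula \eqref{groningen2} drops, and that factor is exactly what your finite-difference vanishing argument for $k>m+n$ relies on.
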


For example, we have
\[
e_1\cdot e_n=ne_n+(n+1)e_{n+1}.
\]
\begin{proof}[Proof of Theorem \ref{groningen}]
The first claim is a mere rewriting of the second equality in
\eqref{eqen1}. To prove \eqref{groningen2} we assume $m\le n$ (so that $m\vee n=n$) and write
\[
\nonumber
\begin{split}
(e_m\cdot
e_n)^\blacktriangledown(k)&=\sum_{y=n}^k(-1)^{y+k}
\begin{pmatrix}k\\ y\end{pmatrix}
\begin{pmatrix}y\\ n\end{pmatrix}
\begin{pmatrix}y\\ m\end{pmatrix}\\
&=\sum_{y=n}^k\begin{pmatrix}k\\ n\end{pmatrix}
\begin{pmatrix}k-n\\ y-n\end{pmatrix}
\begin{pmatrix}y\\ m\end{pmatrix}\\
&=
\begin{pmatrix}k\\ n\end{pmatrix}(-1)^{n+k}\left(\sum_{\ell=0}^{k-n}(-1)^\ell \begin{pmatrix}k-n\\ \ell\end{pmatrix}
\begin{pmatrix}\ell+n\\ m\end{pmatrix}\right)
\end{split}
\]
\end{proof}

\section{The second transform}
\label{sec7}
%
%The purpose in the present section is to give a geometric answer (Theorem \ref{tm88})
%to the characterization question (i) at the beginning of Section \ref{sec4}
%for $\mathcal H(K)$.

We show that the two classes of transforms in our study are two sides of a
harmonic analysis for the reproducing kernel Hilbert space $\mathcal H(K)$,
see Theorem \ref{tm11} and Theorem \ref{tm12} below. In the sense of computations,
a main distinction between the two is that the first class of transforms
involves only finite summations, while the second infinite; see e.g.,
Definition \ref{dn43}, and Proposition \ref{pn46}. In the terminology of the
infinite square matrices in Sections \ref{sec1} and \ref{lastsec},
the distinction reflects the difference in the algebra of infinite lower
triangular matrices, vs upper triangular ones.
In Theorem \ref{tm88}, we establish explicit inversion formulas.
Caution, while the range of the respective isometric transforms are Hilbert
spaces, they differ from one to the other.\\

We note that the function
\begin{equation}
\label{knew}
(-1)^{y+z}
K(y,z)=(-1)^{y}(-1)^zK(y,z)
\end{equation}
is still positive on $\mathbb Z_+$, and
recall that we denote by $\ell_0(\mathbb Z_+)$ the vector space of sequences from $\mathbb
Z_+$ into $\mathbb R$, with compact support. We endow $\ell_0(\mathbb Z_+)$ with the bilinear form
\[
\label{Chevaleret:ligne6}
\langle f,g\rangle_K=\sum_{x=0}^\infty\sum_{y=0}^\infty (-1)^{y+z}f(x)g(y)K(x,y).
\]
\begin{proposition}
The space $\ell_0(\mathbb Z_+)$ endowed with the bilinear form \eqref{Chevaleret:ligne6} is a pre-Hilbert space.
\end{proposition}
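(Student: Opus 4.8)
The plan is to verify directly the three defining properties of an inner product for the form $\langle\,\cdot\,,\,\cdot\,\rangle_K$ on $\ell_0(\mathbb Z_+)$: that it is well defined and bilinear, that it is symmetric, and that it is positive definite (positive semidefinite \emph{and} non-degenerate). Completeness need not be addressed, since ``pre-Hilbert space'' means precisely an inner product space, with no metric completeness required. The first two properties are immediate: for $f,g\in\ell_0(\mathbb Z_+)$ the double sum in \eqref{Chevaleret:ligne6} has only finitely many nonzero terms, so it is well defined and visibly bilinear in $(f,g)$, while symmetry follows from $K(x,y)=K(y,x)$ together with the symmetry of the sign factor $(-1)^{x+y}$.

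The crux is to diagonalize the quadratic form. Writing $\hat f(x)=(-1)^xf(x)$ and substituting the expansion $K(x,y)=\sum_{n\ge0}e_n(x)e_n(y)$ from \eqref{rksum}, one gets, for $f\in\ell_0(\mathbb Z_+)$,
\[
\langle f,f\rangle_K=\sum_{x,y}\hat f(x)\hat f(y)\sum_{n=0}^\infty e_n(x)e_n(y)
=\sum_{n=0}^\infty\Big(\sum_{x}\hat f(x)e_n(x)\Big)^2\ \ge\ 0 .
\]
The rearrangement is harmless because all sums in sight are in fact finite: the $x,y$-sums run over the (finite) support of $f$, and for those $x$ only finitely many $e_n(x)$ are nonzero, so the $n$-sum terminates once $n$ exceeds $\max\operatorname{supp}f$. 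This also re-derives the positivity of the kernel $(-1)^{x+y}K(x,y)$ recorded just before the statement; equivalently, one may simply cite that $(-1)^{x+y}K(x,y)=\sum_n\widetilde{e_n}(x)\widetilde{e_n}(y)$ with $\widetilde{e_n}$ as in \eqref{eqrfn}.

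It remains to prove non-degeneracy, which is the only point needing an argument, and even that is routine triangular back-substitution. Suppose $\langle f,f\rangle_K=0$ for some $f\in\ell_0(\mathbb Z_+)$; by the displayed identity every term vanishes, i.e.\ $\sum_{x}\binom{x}{n}\hat f(x)=0$ for all $n\in\mathbb Z_+$ (using $e_n(x)=\binom{x}{n}$ with $\binom{x}{n}=0$ for $x<n$). If $f$ is supported on $\{0,1,\dots,N\}$, then taking $n=N$ leaves only the term $x=N$, giving $\binom{N}{N}\hat f(N)=\hat f(N)=0$; feeding this back and then taking $n=N-1,N-2,\dots,0$ in turn — each time the only surviving term being $x=n$ with coefficient $\binom{n}{n}=1$ — forces $\hat f(n)=0$ for every $n$, hence $f\equiv0$. (This is nothing but the injectivity of the lower triangular matrix $L$ of \eqref{eq:l}, which has $1$'s on the diagonal, acting on finitely supported sequences.) Consequently $\langle\,\cdot\,,\,\cdot\,\rangle_K$ is a genuine inner product and $\bigl(\ell_0(\mathbb Z_+),\langle\,\cdot\,,\,\cdot\,\rangle_K\bigr)$ is a pre-Hilbert space. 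The main ``obstacle'', such as it is, is merely organizing this back-substitution cleanly; there are no analytic subtleties, as every sum involved is finite.
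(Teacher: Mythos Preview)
Your proof is correct. The overall structure---check bilinearity/symmetry, positivity, and then non-degeneracy---matches the paper's, but your treatment of the last step is genuinely different and more self-contained. The paper argues non-degeneracy by asserting that every finite principal submatrix of $(-1)^{x+y}K(x,y)$ is strictly positive definite, which it does not prove on the spot. You instead use the diagonalization $\langle f,f\rangle_K=\sum_n\bigl(\sum_x(-1)^xf(x)e_n(x)\bigr)^2$ to reduce non-degeneracy to the injectivity of the lower triangular Pascal matrix $L$ on finitely supported sequences, which you dispatch by explicit back-substitution. This buys you a proof that stands on its own; the paper's route is shorter to state but leans on an unproved strict-positivity claim (which, of course, your argument actually establishes as a byproduct, since it shows the quadratic form vanishes only on the zero vector).
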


\begin{proof}
Bilinearity of $\langle \cdot,\cdot\rangle_K$ is clear from the definition, while the positivity property
\[
\langle f,f\rangle_K\ge 0,\quad\forall f\in\ell_0(\mathbb Z_+)
\]
follows from the fact that the function \eqref{knew} is positive definite on $\mathbb Z_+$.
We now show that the form
$\langle \cdot,\cdot\rangle_K$ is non-degenerate. Let $g\in\ell_0(\mathbb Z_+)$ be such that
\[
\langle f,g\rangle_K= 0,\quad\forall f\in\ell_0(\mathbb Z_+).
\]
In particular,
\[
\langle g,g\rangle_K=0.
\]
This cannot be since all the submatrices built from the kernel $K$ from different points
are strictly positive (see Theorem \ref{rksum2}).
\end{proof}

We denote by $\ell_K^2(\mathbb Z_+)$ the closure of $\ell_0(\mathbb Z_+)$ with respect to
$\langle \cdot,\cdot\rangle_K$.

\begin{proposition}
The map
\[
f\quad\mapsto\quad\sum_{y=0}^\infty K(x,y)(-1)^yf(y),
\]
first defined for $f\in\ell_0(\mathbb Z_+)$, extends to a unitary map from $\ell_K^2(\mathbb Z_+)$ onto
$\mathcal H(K)$.
\end{proposition}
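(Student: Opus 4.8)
The plan is to follow the same template used for the $\blacktriangle$-transform in Theorem~\ref{tm:inverse1}, namely to verify the isometry on a dense set of elementary vectors and then extend by continuity, checking surjectivity by hitting a dense set in the target. Concretely, I would let $\Phi$ denote the map in question, defined initially on $\ell_0(\mathbb Z_+)$ by $(\Phi f)(x)=\sum_{y=0}^\infty K(x,y)(-1)^y f(y)$; since $f$ has compact support this sum is finite, so $\Phi f$ is a well-defined function on $\mathbb Z_+$. First I would compute $\Phi$ on the standard basis vectors $\delta_n\in\ell_0(\mathbb Z_+)$: one gets $(\Phi\delta_n)(x)=(-1)^n K(x,n)=(-1)^nK(\cdot,n)(x)$, so $\Phi\delta_n=(-1)^nK(\cdot,n)\in\mathcal H(K)$, which already shows $\Phi$ maps $\ell_0(\mathbb Z_+)$ into $\mathcal H(K)$ and that the range of $\Phi$ contains each reproducing kernel $K(\cdot,n)$ up to sign.

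Next I would check that $\Phi$ is isometric from $(\ell_0(\mathbb Z_+),\langle\cdot,\cdot\rangle_K)$ into $\mathcal H(K)$. By bilinearity (and sesquilinearity in the complexified setting) it suffices to test it on the $\delta_n$: using the reproducing kernel property,
\[
\langle \Phi\delta_n,\Phi\delta_m\rangle_{\mathcal H(K)}
=(-1)^{n+m}\langle K(\cdot,n),K(\cdot,m)\rangle_{\mathcal H(K)}
=(-1)^{n+m}K(n,m),
\]
while on the other side, directly from the definition \eqref{Chevaleret:ligne6} of $\langle\cdot,\cdot\rangle_K$,
\[
\langle \delta_n,\delta_m\rangle_K=(-1)^{n+m}K(n,m).
\]
These agree, so $\Phi$ preserves inner products on the dense subspace $\ell_0(\mathbb Z_+)$ of $\ell_K^2(\mathbb Z_+)$, hence extends uniquely to an isometry $\Phi:\ell_K^2(\mathbb Z_+)\to\mathcal H(K)$.

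Finally I would establish surjectivity: the image of the extended isometry is a closed subspace of $\mathcal H(K)$ (isometric image of a complete space) containing $\{(-1)^nK(\cdot,n):n\in\mathbb Z_+\}$, and since $\{K(\cdot,n):n\in\mathbb Z_+\}$ spans a dense subspace of $\mathcal H(K)$ (the kernels always span a dense set in an RKHS), the image is all of $\mathcal H(K)$. Therefore $\Phi$ is unitary. The one point requiring a little care — and the place I expect the only real friction — is justifying that the formula $(\Phi f)(x)=\sum_y K(x,y)(-1)^yf(y)$, when $f$ ranges over $\ell_K^2$ rather than $\ell_0$, still represents the continuous extension pointwise; but this follows because point evaluation $g\mapsto g(x)$ is continuous on $\mathcal H(K)$ (it equals $\langle g,K(\cdot,x)\rangle_{\mathcal H(K)}$), so one can pass to the limit inside, exactly as in the proof of Corollary~\ref{cor57}. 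Everything else is a transcription of the argument already given for $f\mapsto f^\blacktriangle$.
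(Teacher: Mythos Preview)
Your argument is correct and is essentially the same as the paper's: both rest on the reproducing kernel identity $\langle K(\cdot,n),K(\cdot,m)\rangle_{\mathcal H(K)}=K(n,m)$, which matches the definition of $\langle\cdot,\cdot\rangle_K$ up to the alternating signs, and then extend by density. The only cosmetic difference is that the paper writes the isometry computation directly for a general $f\in\ell_0(\mathbb Z_+)$ rather than reducing to the basis vectors $\delta_n$, and leaves surjectivity implicit; your version spells out the surjectivity step more carefully.
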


\begin{proof}
Let $f\in \ell_0(\mathbb Z_+)$.We have (recall that all the sums are finite since $f$ has finite support)
\[
\begin{split}
\langle f,f\rangle_K&=\sum_{n,m=0}^\infty (-1)^n(-1)^mf(n)f(m)K(n,m)\\
&=\left\langle\sum_{n=0}^\infty K_n(-1)^nf(n),\sum_{m=0}^\infty K_m(-1)^mf(m)\right\rangle_{\mathcal H(K)}.
\end{split}
\]
The claim then follows from density and from the continuity of the inner product.
\end{proof}

\begin{definition}
Let $f\in\ell_0(\mathbb Z_+)$. We set
\[
\label{eq48}
f^\vartriangle(x)=\sum_{y=x}^\infty (-1)^{x+y}\begin{pmatrix}y\\ x\end{pmatrix}f(y).
\]
\label{dn43}
\end{definition}
We note that \eqref{eq48} can be rewritten as
\[
f^\vartriangle=M(-1)f,
\]
where $M(\lambda)$ is defined by \eqref{eqm}.\\

Explicitly, the series is written as
\[
\begin{split}
f^\vartriangle(x)&=(-1)^x\left(f(x)-(x+1) f(x+1)+\frac{(x+2)(x+1)}{2}f(x+2)-\right.\\
&\left.\hspace{20mm}-\frac{(x+3)(x+2)(x+1)}{6}f(x+3)+\cdots\right)
\end{split}
\]
Note that $f^\vartriangle\in\ell_0(\mathbb Z_+)$, and more precisely, if
$f(x)=0$ for $x\ge N$ then $f^\vartriangle(x)=0$ for $x\ge N$.

\begin{example}
Let $g_a$ be defined by \eqref{eqga1}: $g_a(y)=\frac{a^y}{y!}$, where $a\in\mathbb R$.
Then,
\[
\label{eqga}
g_a^\vartriangle=e^{-a}g_a.
\]
\end{example}
Indeed,

\[
\begin{split}
\nonumber
g_a^\vartriangle(x)&=\sum_{y=x}^\infty(-1)^{x+y}\begin{pmatrix}y\\
x\end{pmatrix}
\frac{a^y}{y!}\\
&=\sum_{y=x}^\infty(-1)^{x+y}\frac{a^y}{x!(y-x)!}\\
&=\frac{1}{x!}\sum_{y=x}^\infty(-1)^{x+y}\frac{a^y}{(y-x)!}\\
%\intertext{\rm
&=\frac{a^x}{x!}\sum_{u=0}^\infty(-1)^{u}\frac{a^u}{u!}\\
&=e^{-a}\frac{a^x}{x!}.
\end{split}
\]
where we have used the change of index $y-x=u$ to go from the third
to the fourth line.\\

We note the following: though $g_a$ does not belong to $\mathcal H(K)$ unless $a=0$, the $\Delta$ transform
of $g_a$ exists for all real $a$.\\

In preparation of Theorem \ref{tm12} we introduce the notation
$s(f)$ as:
\[
\label{eq:sf}
f\quad \mapsto \quad s(f):=\sum_{x\in\mathbb Z_+} K_xf(x)
\]

\begin{theorem}
\label{tm12}
The map $f\mapsto s(f^\vartriangle)$ extends to a unitary map
from $\mathcal H(K)$ onto $\ell^2(\mathbb Z_+)$.
\end{theorem}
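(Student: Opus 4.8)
The plan is to mirror the two proofs of Theorem \ref{tm:inverse1}, since the second transform $f\mapsto f^\vartriangle$ together with the map $s$ of \eqref{eq:sf} is the exact $(-1)^{x+y}$-twisted analogue of the pair $(\,{}^\blacktriangledown, \text{kernel section})$ appearing there. First I would record that on the dense subspace $\ell_0(\mathbb Z_+)$ we have, for $f\in\ell_0(\mathbb Z_+)$,
\[
s(f^\vartriangle)=\sum_{x=0}^\infty K_x\,f^\vartriangle(x)
=\sum_{x=0}^\infty K_x\sum_{y=x}^\infty(-1)^{x+y}\begin{pmatrix}y\\x\end{pmatrix}f(y),
\]
all sums finite; swapping the order of summation and using \eqref{rksum2} together with Theorem \ref{tm33} (with the roles of $\ell,m,n$ matched appropriately) collapses the inner sum, so that $s(f^\vartriangle)$ is a finite linear combination of the $e_n$'s with explicit coefficients. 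Concretely I expect to land on $s(\delta_n^\vartriangle)=(-1)^n e_n$ up to sign bookkeeping, i.e. the images of the standard basis vectors of $\ell_0(\mathbb Z_+)$ are, up to signs, the orthonormal basis $\{e_n\}$ of $\mathcal H(K)$.

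The second ingredient is the isometry statement. Here I would invoke the preceding proposition in the excerpt: the map $f\mapsto \sum_{y}K(x,y)(-1)^yf(y)$ is unitary from $\ell_K^2(\mathbb Z_+)$ onto $\mathcal H(K)$, so it suffices to check that $f\mapsto f^\vartriangle$ intertwines the $\ell_K^2$-inner product $\langle\cdot,\cdot\rangle_K$ of \eqref{Chevaleret:ligne6} with the standard $\ell^2(\mathbb Z_+)$ inner product, after which $s(f^\vartriangle)$ — being the composition $f^\vartriangle$ followed by the kernel-section map applied with the sign twist absorbed — is a composition of unitaries. In practice this amounts to computing $\langle \delta_n^\vartriangle,\delta_m^\vartriangle\rangle_{\ell^2}$ and comparing with $\langle \delta_n,\delta_m\rangle_K = (-1)^{n+m}K(n,m)$; both reduce, via Lemma \ref{le:eq} (or equivalently Theorem \ref{tm33}), to a single Vandermonde-type collapse, and they agree. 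Once the identity holds on the dense set $\ell_0(\mathbb Z_+)$, boundedness plus density extends $f\mapsto s(f^\vartriangle)$ to an isometry on all of $\mathcal H(K)$.

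Surjectivity I would get for free from the computation above: the images $s(\delta_n^\vartriangle)$ are, up to the unimodular factors $(-1)^n$, precisely the ONB vectors $e_n$ of $\mathcal H(K)$, hence their span is dense; equivalently, running the argument the other way, $\{\delta_n\}$ spans a dense subspace of $\ell_K^2(\mathbb Z_+)$ and a unitary maps dense sets to dense sets. So the extended map is an isometry with dense range, therefore unitary onto $\ell^2(\mathbb Z_+)$. (Alternatively one exhibits the inverse directly, using the upper-triangular matrix $M(\lambda)$ of \eqref{eqm} with $M(1)$ inverting $M(-1)$, which is the content foreshadowed for Theorem \ref{tm88}; but for the present statement the density argument is cleaner.)

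The main obstacle I anticipate is purely bookkeeping rather than conceptual: keeping the three interlocking sign factors straight — the $(-1)^{x+y}$ built into $f^\vartriangle$ in \eqref{eq48}, the $(-1)^y$ sign twist in the definition of $s$ applied through \eqref{eq:sf} versus the sign in the unitary of the preceding proposition, and the $(-1)^{y+z}$ in the bilinear form \eqref{Chevaleret:ligne6} — while simultaneously justifying the interchange of the (finite, but doubly-indexed) summations and invoking Theorem \ref{tm33} with the correct identification of its indices. None of these steps is hard in isolation, but a careless sign will make the intertwining identity fail, so I would set up the computation once, carefully, on $\delta_n$ and then quote linearity and density for everything else.
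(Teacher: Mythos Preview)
Your approach is correct and is a genuinely different (and in some ways cleaner) route than the paper's. The paper proves the isometry by a single brute-force computation: for a general $f\in\ell_0(\mathbb Z_+)$ it expands $\langle s(f^\vartriangle),s(f^\vartriangle)\rangle_{\mathcal H(K)}$ as a quintuple sum over $x_1,x_2,y,z,j$, then collapses it by applying Lemma~\ref{le:eq} twice (once in $x_1$, once in $x_2$) to arrive at $\sum_j f(j)^2$. No basis vectors, no factorization through $\ell_K^2(\mathbb Z_+)$, and surjectivity is not argued explicitly.

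Your plan instead reduces to basis vectors and invokes Theorem~\ref{tm33}, which already packages the Lemma~\ref{le:eq} identity. This is shorter and makes surjectivity transparent, since you exhibit the ONB $\{e_n\}$ in the range. One small correction to your bookkeeping: the sign works out exactly, with no residual $(-1)^n$. Indeed $\delta_n^\vartriangle(x)=(-1)^{n+x}\binom{n}{x}$ for $x\le n$, so
\[
s(\delta_n^\vartriangle)(m)=\sum_{\ell=0}^n(-1)^{n+\ell}\binom{n}{\ell}K(\ell,m)=e_n(m)
\]
directly by \eqref{eq:inverse11}. Once you have $\delta_n\mapsto e_n$, the map sends an ONB of $\ell^2(\mathbb Z_+)$ to an ONB of $\mathcal H(K)$ and is therefore unitary; your second ingredient (the factorization through $\ell_K^2(\mathbb Z_+)$ via the preceding proposition) is then optional rather than necessary, and I would drop it to avoid the sign-tracking you flag as the main obstacle. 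What each approach buys: the paper's direct computation is self-contained and does not cite Theorem~\ref{tm33}; yours is conceptually cleaner, handles surjectivity explicitly, and mirrors the first proof of Theorem~\ref{tm:inverse1} as you intended.
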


\begin{proof}
Let $f\in\ell_0(\mathbb Z_+)$. We have (recall that all the sums are
finite since $f$ has finite support):
\[
\nonumber
\begin{split}
\langle s(f^\vartriangle), s(f^\vartriangle)\rangle_{\mathcal H(K)}&=\sum_{x_1=0}^\infty\sum_{x_2=0}^\infty
s(f^\vartriangle)(x_1)s(f^\vartriangle)(x_2)K(x_1,x_2)\\
&=\sum_{x_1=0}^\infty\sum_{x_2=0}^\infty \left(\sum_{y=x_1}^\infty
(-1)^{x_1+y}\begin{pmatrix}y\\ x_1\end{pmatrix}f(y)\right)
\times\\
&\hspace{20mm}\times K(x_1,x_2)
\left(\sum_{z=x_2}^\infty (-1)^{x_2+z}\begin{pmatrix}z\\ x_2\end{pmatrix}f(z)\right)\\
&=\sum_{x_1=0}^\infty\sum_{x_2=0}^\infty\sum_{y=x_1}^\infty
\sum_{z=x_2}^\infty\sum_{j=0}^{x_1\wedge x_2}(-1)^{x_1+x_2+y+z}
f(y)f(z)\times\\
&\hspace{39mm}\times
\begin{pmatrix}y\\ x_1\end{pmatrix}\begin{pmatrix}z\\ x_2\end{pmatrix}
\begin{pmatrix}x_1\\ j\end{pmatrix}\begin{pmatrix}x_2\\ j\end{pmatrix}\\
&=\sum_{y=0}^\infty\sum_{z=0}^\infty
f(y)f(z)\left(\sum_{j=0}^\infty\left(\sum_{x_1=j}^y(-1)^{x_1+y}
\begin{pmatrix}y\\ x_1\end{pmatrix}\begin{pmatrix}x_1\\
j\end{pmatrix}\right)\times\right. \\
&\left.\hspace{35mm}\times
 \left(\sum_{x_2=j}^z (-1)^{x_2+z}
\begin{pmatrix}z\\ x_2\end{pmatrix}\begin{pmatrix}x_2\\ j\end{pmatrix}\right)\right)\\
\intertext{\mbox{\rm and using Lemma \ref{le:eq}}}
&=\sum_{j=0}^\infty
\sum_{y=0}^\infty\sum_{z=0}^\infty f(y)f(z)\delta_{j,y}\delta_{j,z}\\
&=\sum_{j=0}^\infty f(j)^2.
\end{split}
\]
\end{proof}

We set $\mathscr A(f)=s(f^\vartriangle)$. It turns out (see the next proposition),
that although
$s(f)$ is difficult to compute for general value of $f$, the operator
$\mathcal A$ and its adjoint are easier
to handle.

\begin{proposition} The adjoint $\mathscr A^*$ of the operator $\mathscr A$ is given by:
\begin{equation}
\left(\mathscr A^*(\varphi)\right)(x)=\sum_{y=x}^\infty(-1)^{x+y}
\begin{pmatrix}y\\x\end{pmatrix}\varphi(y),\quad \varphi\in\mathcal H(K),
\end{equation}
and with values in $\ell^2(\mathbb Z_+)$.
\label{pn46}
\end{proposition}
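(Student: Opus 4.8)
The plan is to compute $\langle \mathscr A(f), \varphi\rangle_{\ell^2(\mathbb Z_+)}$ for $f\in\ell_0(\mathbb Z_+)$ and $\varphi\in\mathcal H(K)$, and to match it against $\langle f, \mathscr A^*(\varphi)\rangle_{\mathcal H(K)}$ — wait, more carefully: since $\mathscr A$ goes from $\mathcal H(K)$ to $\ell^2(\mathbb Z_+)$, I actually want $\langle \mathscr A(\psi),\varphi\rangle_{\ell^2(\mathbb Z_+)} = \langle \psi,\mathscr A^*(\varphi)\rangle_{\mathcal H(K)}$ for all $\psi\in\mathcal H(K)$ and $\varphi\in\ell^2(\mathbb Z_+)$. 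Here it is cleanest to take $\psi$ of the special form $\psi = K_x = K(\cdot,x)$, the reproducing kernel, since these span a dense subspace of $\mathcal H(K)$ and $\mathscr A^*$ is bounded, so verifying the identity on this family suffices. By Lemma \ref{la:direct} and the definition $\mathscr A(f)=s(f^\vartriangle)$, I first compute $\mathscr A$ on the kernels: one expects $\mathscr A(K_x)$ to be (up to sign) a point mass, so that the adjoint identity becomes a finite computation.

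First I would compute $s(K_x^\vartriangle)$. Recall $s(g)=\sum_{u} K_u g(u)$ and for the $\vartriangle$-transform we have $K_x^\vartriangle(u)=\sum_{y\ge u}(-1)^{u+y}\binom{y}{u}K(y,x)$. Now $K(y,x)=\sum_{j=0}^{x\wedge y}\binom{x}{j}\binom{y}{j}$ by \eqref{rksum2}, so interchanging sums and applying Lemma \ref{le:eq} (in the same pattern used in the proof of Theorem \ref{tm12}) collapses the $y$-sum to $\delta_{u,j}$, leaving $K_x^\vartriangle(u)=(-1)^{?}\binom{x}{u}$ — i.e.\ $K_x^\vartriangle = \widetilde{\epsilon_x}$ in the notation of Proposition \ref{eigenvalue}, up to the sign bookkeeping. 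Then $s(K_x^\vartriangle)=\sum_u K_u(-1)^{\cdots}\binom{x}{u}$, and since the reproducing kernels satisfy $\langle K_u, K_v\rangle_{\mathcal H(K)}=K(u,v)$, this vector has the property that $\langle s(K_x^\vartriangle), K_v\rangle = s(K_x^\vartriangle)(v)$. In fact, the cleanest route is simply: $\mathscr A$ is unitary by Theorem \ref{tm12}, so $\mathscr A^* = \mathscr A^{-1}$, and it suffices to exhibit the operator $T$ defined by the claimed formula and check that $\mathscr A\circ T = \mathrm{id}$ on $\ell^2(\mathbb Z_+)$ (equivalently $T\circ\mathscr A=\mathrm{id}$ on $\mathcal H(K)$), again testing on a dense family.

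Concretely, I would verify $\mathscr A^* = T$ where $(T\varphi)(x)=\sum_{y\ge x}(-1)^{x+y}\binom{y}{x}\varphi(y)$ by checking, for $\varphi = e_m$ an ONB vector of $\mathcal H(K)$: by Theorem \ref{tot-le-matin}(1), $e_m^\vartriangle$ should reproduce a point mass pattern, so $\mathscr A(e_m)=s(e_m^\vartriangle)$ is computable; and $T(e_m)(x)=\sum_{y\ge x}(-1)^{x+y}\binom{y}{x}e_m(y)=\sum_{y\ge x\vee m}(-1)^{x+y}\binom{y}{x}\binom{y}{m}$, which via the identity $\binom{y}{x}\binom{y}{m} = \binom{y}{x}\binom{x}{?}\cdots$ and Lemma \ref{le:eq} should collapse to $\delta_{x,m}$ (consistent with $e_m^\blacktriangledown(x)=\delta_{m,x}$ from Theorem \ref{tot-le-matin}(1)). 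Matching $\langle \mathscr A(e_n), e_m\rangle_{\ell^2}$ against $\langle e_n, T(e_m)\rangle_{\mathcal H(K)}=\langle e_n, \delta_m\rangle$ then closes the argument — though one must be careful that $T(e_m)=\delta_m\in\ell^2(\mathbb Z_+)$ as required.

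The main obstacle I anticipate is \emph{convergence and interchange of the infinite sums}: unlike the first transform, the $\vartriangle$-transform and the operator $s$ both involve genuinely infinite summations, so when $\varphi\in\mathcal H(K)$ is a general element (not finitely supported) one must justify that $\sum_{y\ge x}(-1)^{x+y}\binom{y}{x}\varphi(y)$ converges and lands in $\ell^2(\mathbb Z_+)$. Here the a priori growth bound from the corollary following Proposition \ref{prop:republique} — namely $|\varphi(x)|^2\le M\binom{2x}{x}$ — together with the entire-function estimates coming from the Hurwitz transform (Corollary \ref{cor??}, Proposition \ref{prop:republique}) should give enough decay after the alternating cancellation; alternatively, one sidesteps this entirely by only ever testing the adjoint identity on the dense subspace spanned by $\{e_n\}$ (or by the reproducing kernels $\{K_x\}$) and invoking boundedness of $\mathscr A^*$ to extend. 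I would take the latter, cleaner route and relegate the direct convergence analysis to a remark.
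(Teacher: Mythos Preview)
The paper's proof is a direct two-step computation that you circle around but never quite land on. First, get the direction of $\mathscr A$ right: from the proof of Theorem~\ref{tm12} and from the stated domain and codomain in Proposition~\ref{pn46} itself, $\mathscr A=s\circ{}^\vartriangle$ maps $\ell^2(\mathbb Z_+)$ into $\mathcal H(K)$, not the reverse (the phrasing in the statement of Theorem~\ref{tm12} is misleading). Your reversal of the direction is what sends you into the subsequent detours. Second, the paper tests $\varphi=K_{y_0}$, a reproducing kernel in $\mathcal H(K)$, and uses the reproducing property to collapse
\[
\langle\mathscr A f,\,K_{y_0}\rangle_{\mathcal H(K)}=(\mathscr A f)(y_0)=\sum_{t} f^\vartriangle(t)\,K(y_0,t).
\]
One then substitutes the definition of $f^\vartriangle$ and swaps the two sums (both finite for $f\in\ell_0(\mathbb Z_+)$) to read off $(\mathscr A^*K_{y_0})(s)$ as the coefficient of $f(s)$. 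Density of the kernels and boundedness of $\mathscr A^*$ finish.

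You do mention testing on reproducing kernels, which is exactly the right instinct; the point is that the kernel should sit in the $\mathcal H(K)$-slot of the pairing (as $\varphi$), so that the inner product becomes a point evaluation of $\mathscr A f$ and the whole computation reduces to unfolding a definition. Your alternative routes via computing $\mathscr A(K_x)$, via unitarity $\mathscr A^*=\mathscr A^{-1}$, or via the ONB $\{e_m\}$ would all eventually work, but they are unnecessary once the reproducing-kernel trick is in place. Your convergence concern is legitimate --- the formula in the proposition is a genuine infinite alternating sum for general $\varphi$ --- but the paper's route sidesteps it: for $\varphi=K_{y_0}$ the adjoint computation produces a \emph{finite} sum over $t\le s$, and the passage to general $\varphi\in\mathcal H(K)$ is by continuity of $\mathscr A^*$, not by direct summation of the infinite series.
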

\begin{proof} We check the result for $\varphi_{K_{y_0}}$. The general case
follows then by continuity. Let $f\in\ell^2(\mathbb Z_+)$. We have:
\[
\begin{split}
\langle\mathscr Af,\varphi\rangle_{\mathcal H(K)}&=(\mathscr Af)(y_0)\\
&=\sum_{t=0}^\infty f^\vartriangle(t)K(y_0,t)\\
&=\sum_{t=0}^\infty\sum_{s=t}^\infty (-1)^{t+s}f(s)\begin{pmatrix}s\\t\end{pmatrix}
K(y_0,t)\\
&=\sum_{s=0}^\infty f(s)(-1)^{s}\sum_{t=0}^s \begin{pmatrix}s\\t\end{pmatrix}
K(y_0,t),
\end{split}
\]
and hence the result.
\end{proof}

\begin{definition}
Let $g\in\ell_0(\mathbb Z_+)$. We set
\[
g^{\triangledown}(n)=\sum_{x=n}^\infty
\begin{pmatrix}x\\n\end{pmatrix}g(x).
\]
\end{definition}

\begin{theorem}
\label{tm88}
The map $g\mapsto g^\triangledown$ extends to a unitary map from
$\ell_K^2(\mathbb Z_+)$
onto $\ell^2(\mathbb Z_+)$, and its adjoint is the map
$f\mapsto f^\vartriangle$. In particular:
\[
\begin{split}
(g^\triangledown)^\vartriangle&=g,\quad\forall g\in\ell_K^2(\mathbb Z_+) \\
%\intertext{{and}}
\left(f^\vartriangle\right)^\triangledown&=f,\quad\forall
f\in\ell^2(\mathbb Z_+).
\end{split}
\]
\end{theorem}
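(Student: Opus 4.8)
The plan is to recognize the two maps $g \mapsto g^\triangledown$ and $f \mapsto f^\vartriangle$ as, up to signs, the restrictions of the infinite upper-triangular matrices $M(1)$ and $M(-1)$ (in the notation of \eqref{eqm}) to the appropriate sequence spaces, and then to deduce unitarity and the adjoint relationship from the harmonic analysis already assembled. Concretely, I would first verify the two inversion identities as formal (finitely supported) computations: for $f \in \ell_0(\mathbb Z_+)$, compute
\[
\left(f^\vartriangle\right)^\triangledown(n)=\sum_{x=n}^\infty\begin{pmatrix}x\\n\end{pmatrix}\left(\sum_{y=x}^\infty(-1)^{x+y}\begin{pmatrix}y\\x\end{pmatrix}f(y)\right)=\sum_{y=n}^\infty f(y)\left(\sum_{x=n}^y(-1)^{x+y}\begin{pmatrix}y\\x\end{pmatrix}\begin{pmatrix}x\\n\end{pmatrix}\right),
\]
and observe that the inner sum is exactly $\delta_{n,y}$ by Lemma~\ref{le:eq} (after pulling out the sign $(-1)^{y+n}$ and reindexing), so $\left(f^\vartriangle\right)^\triangledown=f$. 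The reverse composition $(g^\triangledown)^\vartriangle=g$ is the same manipulation with the roles of the two sums interchanged, again invoking Lemma~\ref{le:eq}. Since all sums here are finite, there are no convergence subtleties at this stage.

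Next I would establish the isometry. The cleanest route is to recall Theorem~\ref{tm12}, which says $f\mapsto s(f^\vartriangle)$ is unitary from $\mathcal H(K)$ onto $\ell^2(\mathbb Z_+)$, and the proposition preceding Definition~\ref{dn43}, which says $f\mapsto s(f)=\sum_y K(x,y)(-1)^y f(y)$ is unitary from $\ell_K^2(\mathbb Z_+)$ onto $\mathcal H(K)$ (note the sign convention: one should track whether $s$ here carries the $(-1)^y$ factor, and normalize accordingly). Composing, $f\mapsto f^\vartriangle$ followed by $s$ lands isometrically in $\ell^2$; but we want the statement phrased in terms of $g\mapsto g^\triangledown$ on $\ell_K^2$. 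The key algebraic fact to extract is that $g^\triangledown$ and $f^\vartriangle$ are mutually inverse (just proved) and that, under the identifications furnished by $s$ and by the $\mathscr A$-transform of Theorem~\ref{tm12}, the map $g\mapsto g^\triangledown$ corresponds to a composition of unitaries; hence it is itself unitary from $\ell_K^2(\mathbb Z_+)$ onto $\ell^2(\mathbb Z_+)$. For the adjoint claim, I would compute $\langle g^\triangledown, f\rangle_{\ell^2(\mathbb Z_+)}$ for $g\in\ell_0(\mathbb Z_+)$ and $f\in\ell_0(\mathbb Z_+)$ against the $\ell_K^2$-pairing:
\[
\langle g^\triangledown,f\rangle_{\ell^2}=\sum_{n=0}^\infty\left(\sum_{x=n}^\infty\begin{pmatrix}x\\n\end{pmatrix}g(x)\right)f(n),
\]
rearrange the double sum, and match it against $\langle g, f^\vartriangle\rangle_K=\sum_{x,y}(-1)^{x+y}g(x)\,\overline{f^\vartriangle(y)}\,K(x,y)$ expanded via \eqref{rksum2} and Lemma~\ref{le:eq}; the combinatorial identity collapses both sides to the same expression, giving $\left(g\mapsto g^\triangledown\right)^*=\left(f\mapsto f^\vartriangle\right)$. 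Once isometry, surjectivity, and the adjoint identity are in hand, the two displayed inversion formulas $(g^\triangledown)^\vartriangle=g$ and $(f^\vartriangle)^\triangledown=f$ are immediate (a unitary whose adjoint is a two-sided inverse).

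I expect the main obstacle to be bookkeeping rather than conceptual: keeping the sign conventions $(-1)^{x+y}$ versus $(-1)^x(-1)^y$, the $(-1)^y$ inside $s$, and the distinction between $M(1)$ and $M(-1)$ all consistent, so that the composition of unitaries genuinely equals $g\mapsto g^\triangledown$ and not some sign-twisted variant. A secondary point requiring a little care is passing from $\ell_0(\mathbb Z_+)$ to the completion $\ell_K^2(\mathbb Z_+)$: one must check that $g\mapsto g^\triangledown$, a priori defined by an \emph{infinite} sum, is bounded on the dense subspace $\ell_0(\mathbb Z_+)$ (which the isometry estimate gives) before extending by continuity, and symmetrically that $f\mapsto f^\vartriangle$ maps $\ell^2(\mathbb Z_+)$ boundedly — this is where the earlier remark, that the second transform "involves infinite summations," has to be handled honestly, e.g.\ by a Cauchy–Schwarz bound of the type used in Proposition~\ref{prop:republique}.
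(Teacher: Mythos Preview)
Your proposal is correct and tracks the paper's proof closely in two of its three parts. For the inversion identities you do exactly what the paper does in its Step~1: expand the double sum, interchange the order (legitimate on $\ell_0$), and invoke Lemma~\ref{le:eq} to collapse the inner sum to a Kronecker delta. For the adjoint relation your plan is again the paper's Step~2, though the paper streamlines the combinatorics by citing \eqref{eq:inverse11} (Theorem~\ref{tm33}) rather than re-deriving it from Lemma~\ref{le:eq}.

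Where you diverge is the isometry. You propose to obtain it by factoring $g\mapsto g^\triangledown$ as a composition of the unitaries already built in Theorem~\ref{tm12} and the proposition preceding Definition~\ref{dn43}, and you rightly flag the sign-tracking as the main hazard. The paper bypasses this entirely: it simply computes, for $g\in\ell_0(\mathbb Z_+)$,
\[
\|g^\triangledown\|^2_{\ell^2(\mathbb Z_+)}=\sum_{n=0}^\infty\Bigl(\sum_{x\ge n}\begin{pmatrix}x\\ n\end{pmatrix}g(x)\Bigr)^2=\sum_{x,y}g(x)g(y)\sum_{n=0}^{x\wedge y}\begin{pmatrix}x\\ n\end{pmatrix}\begin{pmatrix}y\\ n\end{pmatrix}=\sum_{x,y}g(x)g(y)K(x,y),
\]
and identifies the last expression with $\|g\|^2_{\ell_K^2}$. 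This direct route is shorter and avoids precisely the bookkeeping you were worried about; once isometry on $\ell_0$ is in hand, extension by density and surjectivity (via the already-proved inverse) are immediate, so your concern about bounding the infinite sums is also absorbed. Your approach would work, but the direct computation is cleaner.
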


\begin{proof} We divide the proof in two steps.\\

STEP 1: {\sl
We first prove that the two transforms are inverse of each other. Then we show that
they are adjoint of each other.}\\

Let $g\in\ell_0(\mathbb Z_+)$. We have:
\begin{eqnarray*}
(g^\triangledown)^\vartriangle(y)&=&\sum_{n=0}^\infty
(-1)^{n+y}\begin{pmatrix}n\\ y\end{pmatrix}(g^\vartriangle(n))\\
&=&\sum_{n=y}^\infty(-1)^{n+y}\begin{pmatrix}n\\ y\end{pmatrix}\left(\sum_{x=n}^\infty\begin{pmatrix}x\\ n\end{pmatrix}
g(x)\right)\\
&=&\sum_{x=y}^\infty\left(\sum_{n=y}^x(-1)^{n+y}\begin{pmatrix}n\\ y\end{pmatrix}\begin{pmatrix}x\\ n\end{pmatrix}\right)\\
&=&g(y)
\end{eqnarray*}
since (see Lemma \ref{le:eq})
\[
\sum_{n=y}^x(-1)^{n+y}\begin{pmatrix}n\\ y\end{pmatrix}\begin{pmatrix}x\\ n\end{pmatrix}=\delta_{x,y}.
\]
Similarly, we have for $f\in\ell^2(\mathbb Z_+)$ and $n\ge 0$:

\begin{eqnarray*}
\left((f^\vartriangle)^\triangledown\right)(n)&=&\sum_{x=n}^\infty \begin{pmatrix}x\\ n\end{pmatrix} f^\vartriangle(x)\\
&=&\sum_{x=n}^\infty\begin{pmatrix}x\\ n\end{pmatrix}\left(\sum_{m=x}^\infty \begin{pmatrix}m\\ x\end{pmatrix}f(m)\right)\\
&=&\sum_{m=n}^\infty f(n)\left(\sum_{x=n}^m(-1)^{x+m}
\begin{pmatrix}m\\ x\end{pmatrix}\begin{pmatrix}x\\ n\end{pmatrix}\right)\\
&=&f(m)
\end{eqnarray*}
since (see Lemma \ref{le:eq})
\[
\sum_{x=n}^m(-1)^{x+m}
\begin{pmatrix}m\\ x\end{pmatrix}\begin{pmatrix}x\\ n\end{pmatrix}=\delta_{m,n}.
\]

STEP 2: {\sl We  now prove that the two transforms are adjoint of each other.}\\

Let $f\in\ell^2(\mathbb Z_+)$ and
$g\in\ell_K^2(\mathbb Z_+)$. We have:
\[
\begin{split}
\langle g,
f^\vartriangle\rangle_{\ell_K^2(\mathbb Z_+)}&=\sum_{x,y=0}^\infty g(x)f^\vartriangle
(y)K(x,y)\\
&=\sum_{x,y=0}^\infty g(x)\left(\sum_{t=y}^\infty (-1)^{t+y}f(t)\begin{pmatrix}t\\ y
\end{pmatrix}\right)K(x,y)\\
&=\sum_{t=0}^\infty f(t)\left(\sum_{x=0}^\infty g(x)\sum_{y=0}^t
(-1)^{t+y}\begin{pmatrix}t\\ y
\end{pmatrix}\right)K(x,y)\\
&=\sum_{t=0}^\infty f(t)\left(\sum_{x=t}^\infty g(x)\begin{pmatrix}x\\ t
\end{pmatrix}\right)
\intertext{\rm where we use \eqref{eq:inverse11}}
&=\langle g^\triangledown f\rangle_{\ell^2(\mathbb Z_+)}
\end{split}
\]

To conclude we now prove that the $\triangledown$ transform is an isometry.
We first prove this on the space $\mathcal H_0$ of
functions from $\mathbb Z_+$ into $\mathbb R$ with finite support. For $g\in \ell_0$
we have:
\[
\begin{split}
\|g^\vartriangle\|^2_{\mathcal H(K)}&=\sum_{n=0}^\infty\left(\sum_{x=n}^\infty
\begin{pmatrix}x\\ n\end{pmatrix}g(x)\right)^2\\
&=\sum_{x,y=0}^\infty g(x)g(y)\left(\sum_{n=0}^{x\wedge y}
\begin{pmatrix}x\\ n
\end{pmatrix}\begin{pmatrix}y\\ n
\end{pmatrix}\right)\\
&=\sum_{x,y=0}^\infty g(x)g(y)K(x,y)\\
&=\|g\|^2_{\ell^2(K)}.
\end{split}
\]
\end{proof}

\section{Motivation from discrete analytic functions}
\label{sec8}
Here we offer a brief outline of the significance of our binomial
functions in the study of discrete analytic functions. While our
binomial functions are functions of a single discrete variable
(here  $\mathbb Z_+$), we show in \cite{2012arXiv1208.3699A} that
the binomial functions extend uniquely to discrete analytic
functions on the $2D$ lattice, so formulas obtained in the earlier
part of our paper have direct implications for Hilbert spaces of
discrete analytic functions.\\

Recall first that a function $f:\ZZ^2\longrightarrow\CC$ is said to
be discrete analytic if
\[
\label{DCR} \forall (x,y)\in \ZZ^2,\qquad
\dfrac{f(x+1,y+1)-f(x,y)}{1+i}=\dfrac{f(x+1,y)-f(x,y+1)}{1-i}.
\]
See \cite{MR0013411,MR0078441}.\\

Given a function
$f_0\,\,:\,\,\ZZ\,\,\longrightarrow\,\,\mathbb C$ there are
infinitely discrete analytic functions $f$ on $\ZZ^2$ such that
$f(x,0)=f_0(x)$. However,when $f_0$ is a polynomial, only one of these discrete
analytic extensions will be a polynomial in $x,y$. See \cite{MR0078441} (we gave a new proof of this
fact in \cite{2012arXiv1208.3699A}). Thus,
there exists a unique discrete analytic polynomial
$\zeta_n(x,y)$ determined by $\zeta_n(x,0)\equiv x^{[n]}$, where $x^{[n]}$ has
been defined in \eqref{xnstr}.

\section{The quantum case}
\setcounter{equation}{0}
\label{sec9}
In a number of questions from mathematical physics
(see \cite{MR2886683,MR2821778,MR1284951} and the papers cited there),
it is of interest to consider the questions from sections \ref{sec3} through
\ref{sec5} for a deformation family of the binomial functions
\eqref{eqen}. Traditionally the complex deformation parameter is denoted $q$ ,
and the $q$-versions of the binomial formulas were first studied in
\cite{MR0027288}. The purpose of this section is to prove an analogue of the orthonormal basis
theorems but adapted to the $q$-binomials.\\

The $q$-deformed binomial coefficients were defined by Carlitz;
see \cite{MR0095982,MR0027288}. We fix $q\in\mathbb
R\setminus\left\{1\right\}$, and set
\[
\begin{split}
[x]&:=\frac{q^x-1}{q-1},\\
[x]_n&:=[x][x-1]\cdots [x-n+1],\\
[n]!&:=[n]_n,\quad {\rm and}\quad [0]!=[0]=1,\\
\left[\begin{matrix} x\\
n\end{matrix}\right]&:=\frac{[x]_n}{[n]!},
\end{split}
\]
where all the expressions depend on the value of $q$. To lighten
the notation, we do not stress the $q$-dependence. We introduce
\[
\label{eqenq}
e_n(x)=\begin{cases}\,\,0\,,
\quad\hspace{2.cm}{\rm if}\quad x<n,\\
&\\
\left[\begin{matrix} x\\
n\end{matrix}\right]=\frac{[x]_n}{[n]!},\hspace{1.15cm}{\rm if}
\quad x\ge n,
\end{cases}
\]
and
\[
\label{rksum1} K(x,y)=\sum_{n=0}^\infty e_n(x)e_n(y), \quad
x,y\in\mathbb Z_+.
\]
The sum \eqref{rksum1} is always finite and well defined, and
defines a positive definite function on $\mathbb Z_+$. It can also
be rewritten as:
\[
\label{rksum21}
K(x,y)=\sum_{n=0}^{x\wedge y}
\left[\begin{matrix} x\\
n\end{matrix}\right]\left[\begin{matrix} y\\
n\end{matrix}\right].
\]

The functions $e_0,e_1,\ldots$ are easily seen to be linearly independent, and
so we have:

\begin{theorem}
The functions $e_0, e_1,\ldots$ form an orthonormal basis of
$\mathcal H(K)$.
\end{theorem}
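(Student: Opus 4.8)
The plan is to mimic exactly the structure of the $q=1$ case, where the analogous statement followed from the identity $K=LL^*$ together with the characterization of $\mathcal H(K)$ as the space of functions $\sum_n a_n e_n$ with squared norm $\sum_n a_n^2$, as recalled in the introduction. First I would observe that the kernel in \eqref{rksum1} is by construction of the form $K(x,y)=\sum_{n=0}^\infty e_n(x)e_n(y)$, i.e. $K=LL^*$ where now $L$ is the infinite lower triangular matrix with entries $L_{x,n}=e_n(x)=\left[\begin{matrix}x\\ n\end{matrix}\right]$ for $n\le x$ and $0$ otherwise. Exactly as in \cite[corollary 4, p. 169]{schwartz} (the general factorization principle for reproducing kernels), a factorization $K=LL^*$ with $L$ having linearly independent columns realizes $\mathcal H(K)$ as the range of $L$ acting on $\ell^2(\mathbb Z_+)$, with the columns of $L$ — here the functions $e_n$ — forming an orthonormal basis. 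So the theorem reduces to two points: that the sum defining $K$ is finite (hence $K$ is a well-defined positive definite kernel, already asserted after \eqref{rksum1}), and that the $e_n$ are linearly independent in $\mathbb R^{\mathbb Z_+}$.

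The linear independence is the only substantive step, and it is where I would focus. The key structural fact is that $e_n$ is supported on $\{n,n+1,n+2,\dots\}$ and vanishes on $\{0,1,\dots,n-1\}$, while $e_n(n)=\left[\begin{matrix}n\\ n\end{matrix}\right]=1\neq 0$. Thus the infinite matrix $\big(e_n(x)\big)_{x,n}$ is lower triangular with all diagonal entries equal to $1$. Given a finite linear combination $\sum_{n=0}^N c_n e_n=0$, evaluate successively at $x=0,1,2,\dots,N$: at $x=0$ only $e_0$ survives and gives $c_0\cdot 1=0$; inductively, having shown $c_0=\cdots=c_{k-1}=0$, evaluation at $x=k$ gives $c_k e_k(k)=c_k=0$. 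Hence all $c_n=0$. This uses only that $q\neq 1$ insofar as it guarantees $[n]!\neq 0$ so that the $q$-binomial coefficients are genuinely defined and the normalization $e_n(n)=1$ holds; no other property of $q$ is needed, and indeed the excerpt flags precisely this ("easily seen to be linearly independent") just before the statement.

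With linear independence in hand, I would then invoke the factorization lemma to conclude. Concretely: the map $V:\ell^2(\mathbb Z_+)\to\mathcal H(K)$, $V(a_n)_n=\sum_n a_n e_n$, is well-defined (the sum converges pointwise because for each fixed $x$ only finitely many $e_n(x)$ are nonzero, and convergence in $\mathcal H(K)$ follows from $\sum a_n^2<\infty$), is isometric by the very description of the norm on $\mathcal H(K)$, and is onto since its range contains each $K(\cdot,y)=\sum_n e_n(y)e_n$ and these span a dense subspace. Therefore $(e_n)_{n\in\mathbb Z_+}=V(\delta_n)_n$ is the image of the standard orthonormal basis of $\ell^2(\mathbb Z_+)$ under a unitary, hence an orthonormal basis of $\mathcal H(K)$. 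I do not expect any real obstacle here; the one point requiring a line of care is the well-definedness and density argument for $V$, but that is identical to the classical $q=1$ situation already used in the paper, so it can be cited rather than redone.
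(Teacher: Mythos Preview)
Your proposal is correct and matches the paper's approach exactly: the paper gives no proof beyond the sentence ``The functions $e_0,e_1,\ldots$ are easily seen to be linearly independent, and so we have'' together with the earlier citation of \cite[corollary 4, p.~169]{schwartz}, and you have simply written out the lower-triangular argument that makes ``easily seen'' explicit. One small caveat worth flagging (an issue with the paper's hypotheses rather than your argument): the condition $q\in\mathbb R\setminus\{1\}$ does not by itself force $[n]!\neq 0$, since $q=-1$ gives $[2]=0$; your argument goes through verbatim once one assumes the $q$-binomials are well-defined, which is all the paper is implicitly using.
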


\section{A one parameter group of transforms}
\setcounter{equation}{0}
\label{lastsec}
Let $\lambda\in(0,\infty)$. The function
\[
\label{klambdaker}
K_\lambda(x,y)=\sum_{n=0}^{x\wedge y}\lambda^{x+y-2n}\begin{pmatrix}x\\ n \end{pmatrix}
\begin{pmatrix}y\\ n \end{pmatrix}
\]
is positive definite in $\mathbb Z_+$, and we denote by
$\mathcal H(K_\lambda)$ the associated reproducing kernel Hilbert space.
The functions
\[
e^{(\lambda)}_n(x)=\lambda^{x-n}e_n(x),\quad, n=0,1,\ldots
\]
form an orthonormal basis of the associated reproducing kernel Hilbert space
$\mathcal H(K_\lambda)$
Note that the case $\lambda=1$ corresponds to formulas \eqref{rksum2} and
\eqref{eqen}. Let $L(\lambda)$ denote the Zadeh-transform of the lower triangular
infinite matrix $L$ given by \eqref{eq:l}, that is the lower triangular infinite
matrix $M(\lambda)$ as follows: For a point $(x,y)\in\mathbb Z_+\times \mathbb Z_+$,
let $x$ denote row-index and $y$ denote column-index. The matrix $L(\lambda)$
is defined by:
\[
\label{eq1}
\left(L(\lambda)\right)_{(x,y)}
=\begin{cases}\lambda^{x-y}\begin{pmatrix}x\\ y\end{pmatrix},\quad
{\rm if}\quad 0\le y\le x\\
\,\, 0,\quad\hspace{1.4cm}{\rm otherwise.}\end{cases}
\]
Thus, $L(\lambda)$ is given by
\[
\label{petites-ecuries}
\left(
\begin{array}{ccccccccccc}
1&0&0&0&0&0&0&\cdots&\quad&\cdot&\cdots\\
\lambda&1&0&0&0&0&0&\cdots&\quad&\cdot&\cdots\\
\lambda^2&2\lambda&1&0&0&0&0&\cdots&\quad&\cdot&\cdots\\
\lambda^3&3\lambda^2&3\lambda&1&0&0&0&\cdots&\quad&\cdot&\cdots\\
\lambda^4&4\lambda^3&6\lambda^2&4\lambda&1&0&0&\cdots&\quad&\cdot&\cdots\\
\vdots&\vdots& & & & & & & & & \\
\lambda^n&n\lambda^{n-1}&
\lambda^{n-2}\begin{pmatrix}n\\2\end{pmatrix}&\cdots&&n\lambda&1&0
&0&\cdots&\cdots \\
\lambda^{n+1}&(n+1)\lambda^n&
\lambda^{n-1}\begin{pmatrix}n+1\\2\end{pmatrix}&\cdots&&
&(n+1)\lambda&1&0
&\cdots&\cdots \\
\vdots&\vdots&\vdots&&&\vdots&\vdots&\vdots&1&&\\
\vdots&\vdots&\vdots&&&\vdots&\vdots&\vdots&\vdots&&
\end{array}
\right).
\]
Then the matrix product $L(\lambda)L(\mu)$ makes sense for all
$\lambda,\mu\in\mathbb C$, with
\[
\label{yomadin1}
(L(\lambda)L(\mu))_{(x,y)}=\sum_{k=y}^x L(\lambda)_{(x,k)}L(\mu)_{(k,y)},\quad
\lambda,\mu\in\mathbb C.
\]
\begin{lemma}
With the notation above it holds that
\[
\label{grouplaw}
L(\lambda+\mu)=L(\lambda)L(\mu),\quad \lambda,\mu\in\mathbb C,
\]
i.e. the matrices in \eqref{eq1} define a one-parameter semi-group of infinite
matrices.
\end{lemma}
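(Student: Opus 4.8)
The plan is to verify the semigroup law entrywise, that is, to show that for every pair $(x,y)\in\mathbb Z_+\times\mathbb Z_+$ one has $(L(\lambda+\mu))_{(x,y)} = (L(\lambda)L(\mu))_{(x,y)}$. Since both sides vanish unless $0\le y\le x$ (the matrices are lower triangular and the product of lower triangular matrices is lower triangular), it suffices to treat the case $0\le y\le x$. The left-hand side is, by definition \eqref{eq1}, simply $(\lambda+\mu)^{x-y}\binom{x}{y}$, so everything reduces to a single binomial identity.

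First I would write out the right-hand side using \eqref{yomadin1}:
\[
(L(\lambda)L(\mu))_{(x,y)}=\sum_{k=y}^x \lambda^{x-k}\begin{pmatrix}x\\ k\end{pmatrix}\mu^{k-y}\begin{pmatrix}k\\ y\end{pmatrix}.
\]
Next I would apply the elementary factorization of a product of two binomial coefficients already used in the proof of Lemma \ref{le:eq}, namely $\binom{x}{k}\binom{k}{y}=\binom{x}{y}\binom{x-y}{k-y}$, to pull $\binom{x}{y}$ out of the sum:
\[
(L(\lambda)L(\mu))_{(x,y)}=\begin{pmatrix}x\\ y\end{pmatrix}\sum_{k=y}^x \begin{pmatrix}x-y\\ k-y\end{pmatrix}\lambda^{x-k}\mu^{k-y}.
\]
Then I would substitute $\ell = k-y$, so the sum runs from $\ell=0$ to $\ell=x-y$ and the exponents become $\lambda^{(x-y)-\ell}\mu^{\ell}$, giving
\[
(L(\lambda)L(\mu))_{(x,y)}=\begin{pmatrix}x\\ y\end{pmatrix}\sum_{\ell=0}^{x-y}\begin{pmatrix}x-y\\ \ell\end{pmatrix}\lambda^{(x-y)-\ell}\mu^{\ell}=\begin{pmatrix}x\\ y\end{pmatrix}(\lambda+\mu)^{x-y},
\]
where the last equality is the binomial theorem applied with exponent $x-y$. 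This matches $(L(\lambda+\mu))_{(x,y)}$, completing the verification.

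There is essentially no obstacle here; the only point requiring a word of care is convergence, and \eqref{yomadin1} already records that the matrix product $L(\lambda)L(\mu)$ is well defined because for fixed $(x,y)$ the defining sum over $k$ is finite (it ranges only over $y\le k\le x$). Since the computation is finite-dimensional at each entry, no analytic subtlety arises, and the claim that the family $\{L(\lambda)\}_{\lambda\in\mathbb C}$ forms a one-parameter semigroup — indeed a group, with $L(0)=I$ the identity matrix and $L(\lambda)^{-1}=L(-\lambda)$ — follows at once from \eqref{grouplaw}.
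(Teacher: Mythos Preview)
Your proof is correct. The core identity you invoke, $\binom{x}{k}\binom{k}{y}=\binom{x}{y}\binom{x-y}{k-y}$, followed by the binomial theorem, is exactly the engine of the paper's argument as well. The difference is purely in packaging: the paper first writes $L(\lambda)=D(\lambda)L(1)D(\lambda)^{-1}$ with $D(\lambda)$ the diagonal matrix $\mathrm{diag}(1,\lambda,\lambda^2,\ldots)$, reduces the product $L(\lambda)L(\mu)$ to $D(\lambda)\,L(1)D(\mu/\lambda)L(1)\,D(\mu)^{-1}$, and only then performs the entrywise computation on the inner factor $L(1)D(\mu/\lambda)L(1)$. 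Your direct entrywise computation is shorter and avoids the spurious restriction $\lambda\neq 0$ that the paper's diagonal factorization implicitly imposes (the ratio $\mu/\lambda$ and the inverse $D(\lambda)^{-1}$ are undefined at $\lambda=0$, so the paper's version strictly speaking needs a separate word for that case). The paper's route, on the other hand, isolates the structural fact that $L(\lambda)$ is conjugate to $L(1)$ via a diagonal scaling, which is of independent interest. Either way the substance is the same.
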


\begin{proof}
We give the details, but note that the computation of \eqref{yomadin1},
is essential the same as that given in the proof of Lemma \ref{le:eq} above. In other
words, when the product in \eqref{yomadin1} is computed, the group law \eqref{grouplaw}
follows. More precisely,
\[
\label{eq581}
\begin{split}
L(\lambda)L(\mu)&=D(\lambda)L(1)D(\lambda)^{-1}D(\mu)L(1)D(\mu)^{-1}\\
                &=D(\lambda)L(1)D\left(\frac{\mu}{\lambda}\right)L(1)D(\mu)^{-1}
\end{split}
\]
But for $y\le x$ we have:
\[
\nonumber
\begin{split}
\left(L(1)D\left(\frac{\mu}{\lambda}\right)L(1)\right)_{(x,y)}&=\sum_{k=y}^x
\begin{pmatrix}x\\ k\end{pmatrix}\left(\frac{\mu}{\lambda}\right)^k\begin{pmatrix}
k\\ y\end{pmatrix}\\
\intertext{\mbox{\rm and using $\begin{pmatrix}x\\ k\end{pmatrix}
\begin{pmatrix}k\\ y\end{pmatrix}=\begin{pmatrix}x\\ y\end{pmatrix}
\begin{pmatrix}x-y\\ k-y\end{pmatrix}$, this equality is equal to}}
&=\begin{pmatrix}x\\ y\end{pmatrix}\left(\sum_{k=y}^x
\left(\frac{\mu}{\lambda}\right)^k\begin{pmatrix}
x-y\\ k-y\end{pmatrix}\right)\\
&=\left(1+\frac{\mu}{\lambda}\right)^{x-y}
\left(\frac{\mu}{\lambda}\right)^y
\begin{pmatrix}x\\ y\end{pmatrix}.
\end{split}
\]
Thus
\[
L(1)D\left(\frac{\mu}{\lambda}\right)L(1)=D\left(1+\frac{\mu}{\lambda}\right)L(1)
D\left(\frac{\mu}{\mu+\lambda}\right),
\]
and this, together with \eqref{eq581} leads to the result.
\end{proof}

\begin{corollary}
The one-parameter group $\lambda\mapsto L(\lambda)$ of infinite matrices
in \eqref{petites-ecuries} has the form
\[
\label{manege}
L(\lambda)=\exp (\lambda A),
\]
where $A$ is the following sub-diagonal banded matrix
\[
A_{(x,y)}=x\delta_0(x-y-1),
\]
see \eqref{eqA}.
\end{corollary}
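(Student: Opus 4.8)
The statement asserts that the one-parameter semigroup $\lambda \mapsto L(\lambda)$ of lower-triangular infinite matrices has infinitesimal generator $A$ with $A_{(x,y)} = x\,\delta_0(x-y-1)$, i.e. $A$ has the integers $1,2,3,\dots$ down the first subdiagonal and zeros elsewhere, and $L(\lambda) = \exp(\lambda A)$. The natural route is to differentiate the semigroup law at $\lambda = 0$ to identify $A$, then verify that $\exp(\lambda A)$ reproduces the matrix entries in \eqref{eq1}. I would first set $A := \frac{d}{d\lambda}\big|_{\lambda = 0} L(\lambda)$, computed entrywise: from \eqref{eq1}, $(L(\lambda))_{(x,y)} = \lambda^{x-y}\binom{x}{y}$ for $0 \le y \le x$, so $\frac{d}{d\lambda}\lambda^{x-y}\binom{x}{y}$ evaluated at $\lambda = 0$ is nonzero only when $x - y = 1$, giving $\binom{x}{x-1} = x$. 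Hence $A_{(x,y)} = x\,\delta_0(x-y-1)$, which is exactly the claimed form. Each entry is a polynomial in $\lambda$, so differentiation under the (finite, row-by-row) matrix operations is unproblematic.

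The core step is to show $\exp(\lambda A) = L(\lambda)$. Since $A$ is strictly lower triangular and banded (supported on the subdiagonal), its powers $A^k$ are supported on the $k$-th subdiagonal, so for each fixed pair $(x,y)$ only finitely many terms of $\sum_k \frac{\lambda^k}{k!}A^k$ contribute — in fact only $k = x - y$. I would compute $(A^k)_{(x,y)}$ for $k = x-y$ explicitly: telescoping the product along the chain $x \to x-1 \to \cdots \to y$, one gets $(A^{x-y})_{(x,y)} = x(x-1)\cdots(y+1) = \frac{x!}{y!}$. Therefore $(\exp(\lambda A))_{(x,y)} = \frac{\lambda^{x-y}}{(x-y)!}\cdot\frac{x!}{y!} = \lambda^{x-y}\binom{x}{y}$, which matches \eqref{eq1} entrywise; all other entries vanish on both sides. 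This is a direct verification requiring no convergence subtleties beyond the observation that the exponential series is finite in each matrix entry.

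Alternatively — and perhaps more in the spirit of the preceding Lemma — one can argue via the semigroup property established in \eqref{grouplaw}: the map $\lambda \mapsto L(\lambda)$ is a one-parameter semigroup, it is real-analytic in $\lambda$ entrywise (each entry being a monomial in $\lambda$), and $L(0) = I$. Such a semigroup is necessarily of the form $\exp(\lambda A)$ with $A = L'(0)$, since both $t \mapsto L(t)$ and $t \mapsto \exp(tA)$ satisfy the same first-order matrix ODE $X'(t) = A X(t)$, $X(0) = I$, and the entrywise (row-finite) structure guarantees uniqueness. I would then still record the explicit form of $A$ from the $\lambda = 0$ derivative as above.

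The main obstacle is essentially bookkeeping rather than conceptual: one must be careful that all manipulations — the matrix product $L(\lambda)L(\mu)$, the differentiation at $\lambda = 0$, and the exponential series — are well-defined for these infinite matrices. This is handled by noting that $L(\lambda)$ and $A$ are both lower triangular, so in any entry $(x,y)$ only indices between $y$ and $x$ intervene; every sum is finite, and no topology on the space of infinite matrices is needed. I would state this observation once at the start and then the computation of $(A^{x-y})_{(x,y)} = x!/y!$ is the only genuine calculation, and it is a short telescoping product.
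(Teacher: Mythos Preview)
Your proof is correct. The paper, in fact, states this Corollary without any proof at all, treating it as an immediate consequence of the group law \eqref{grouplaw} established in the preceding Lemma; the implicit reasoning is your second (alternative) argument: an entrywise-analytic one-parameter group with $L(0)=I$ must be $\exp(\lambda L'(0))$.

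Your primary argument goes beyond what the paper supplies. Rather than appealing to general semigroup/ODE uniqueness, you compute $(A^k)_{(x,y)}$ directly: since $A$ is supported on the first subdiagonal with $A_{(j,j-1)}=j$, the only surviving term in $(\exp(\lambda A))_{(x,y)}$ is $k=x-y$, and the telescoping product $x(x-1)\cdots(y+1)=x!/y!$ gives exactly $\lambda^{x-y}\binom{x}{y}$. This is a clean, self-contained verification that sidesteps any need to invoke uniqueness for matrix ODEs in the infinite-dimensional setting. Your observation that lower-triangularity makes every sum finite (so no topology is needed) is the right way to dispose of the well-definedness issues, and it applies equally to both of your arguments.
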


\[
\label{eqA}
\left(
\begin{array}{ccccccccccc}
0&0&0&0&0&0&0&\cdots&\quad&\cdot&\cdots\\
1&0&0&0&0&0&0&\cdots&\quad&\cdot&\cdots\\
0&2&0&0&0&0&0&\cdots&\quad&\cdot&\cdots\\
0&0&3&0&0&0&0&\cdots&\quad&\cdot&\cdots\\

\vdots&\vdots& & & & & & & & & \\
0&0&
\cdots&\cdots&&n-1&0&0
&0&\cdots&\cdots \\
0&0&
0&\cdots&&
&n&0&0
&\cdots&\cdots \\
0&0&
\cdots&\cdots&&0&0&n+1
&0&\cdots&\cdots \\
%0&0&
%0&\cdots&&
%&0&n+1&0
%&\cdots&\cdots \\
%\vdots&\vdots&\vdots&&&\vdots&\vdots&\vdots&&&\\
\vdots&\vdots&\vdots&&&\vdots&\vdots&\vdots&\vdots&&
\end{array}
\right).
\]

We denote
\[
\label{eqm}
M(\lambda)=L(\lambda)^*.
\]

We define
\[
(T_{-\lambda}(e_n^{(\lambda)}))(x)=\delta_{n,x}.
\]

The proof of the following theorem is easy and will be omitted.

\begin{theorem}
The map
\[
T_{-\lambda}\,\,:\,\, f\mapsto L(\lambda)^*f
\]
is an isometry from $\mathcal H(K_\lambda)$ into $\ell^2(\mathbb Z_+)$.
\end{theorem}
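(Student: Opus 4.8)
The plan is to reduce everything to the already-established machinery for the $\la = 1$ case, using the diagonal conjugation that relates $K_\la$ to $K_1 = K$. First I would observe that the map $D(\la) \,:\, f(x) \mapsto \la^x f(x)$, acting diagonally, is a unitary from $\mathcal H(K)$ onto $\mathcal H(K_\la)$: indeed, from \eqref{klambdaker} we have $K_\la(x,y) = \la^{x+y} \sum_{n=0}^{x \wedge y} \la^{-2n} \binom{x}{n}\binom{y}{n}$, and more to the point the stated ONB $e_n^{(\la)}(x) = \la^{x-n} e_n(x) = \la^{-n} (D(\la) e_n)(x)$ shows that $D(\la)$ carries the ONB $\{e_n\}$ of $\mathcal H(K)$ to the orthogonal system $\{\la^n e_n^{(\la)}\}$; since the $e_n^{(\la)}$ are already asserted to form an ONB of $\mathcal H(K_\la)$, one checks that $f \mapsto \sum_n \la^{-n}\langle f, e_n\rangle_{\mathcal H(K)}\, e_n^{(\la)}$... actually the cleanest normalization: $\|\sum a_n e_n^{(\la)}\|^2_{\mathcal H(K_\la)} = \sum a_n^2$, and the preimage under the ``multiply by $\la^x$'' map of $e_n^{(\la)}$ is $\la^{-n} e_n$, so the right statement is that $f \mapsto D(\la) f$ sends $\sum a_n \la^{-n} e_n \in \mathcal H(K)$ to $\sum a_n e_n^{(\la)}$, an isometry.

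Next I would identify $L(\la)^*$ acting on the ONB. By Proposition~\ref{eigenvalue} (the $\la=1$ case) we have $\delta_n^\blacktriangle = e_n$, i.e. $L(1)\delta_n = e_n$; transposing, $L(1)^* e_n = \delta_n$ in the appropriate weak sense, and more usefully the defining relation $(T_{-\la}(e_n^{(\la)}))(x) = \delta_{n,x}$ tells us exactly what the map does on a basis: $T_{-\la}$ sends the ONB vector $e_n^{(\la)}$ of $\mathcal H(K_\la)$ to the ONB vector $\delta_n$ of $\ell^2(\mathbb Z_+)$. So the core computation is to verify that the formula $f \mapsto L(\la)^* f$, with $L(\la)$ given entrywise by \eqref{eq1} and hence $L(\la)^* = M(\la)$ by \eqref{eqm}, does indeed produce $\delta_n$ when fed $e_n^{(\la)}$. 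Writing it out, $(L(\la)^* e_n^{(\la)})(x) = \sum_{y \ge x} (L(\la))_{(y,x)} e_n^{(\la)}(y) = \sum_{y \ge x \vee n} \la^{y-x}\binom{y}{x} \la^{y-n} e_n(y)$... hmm, the exponents need care; I would instead factor $L(\la) = D(\la) L(1) D(\la)^{-1}$ (the identity used in the proof of Lemma~\ref{grouplaw}, via \eqref{eq581}), so $L(\la)^* = D(\la)^{-1} L(1)^* D(\la)$, and then the statement ``$L(\la)^*$ carries $e_n^{(\la)}$ to $\delta_n$'' follows from the $\la=1$ fact ``$L(1)^*$ carries $e_n$ to $\delta_n$'' together with $D(\la)^{-1} e_n^{(\la)} = \la^{-n} e_n$ and $D(\la)\delta_n = \la^n \delta_n$, the powers of $\la$ cancelling exactly. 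Once $T_{-\la}$ sends an ONB to an orthonormal set in $\ell^2$, it is automatically isometric on finite linear combinations, and extends by continuity to all of $\mathcal H(K_\la)$.

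The one genuinely delicate point — and the step I expect to be the main obstacle — is justifying the interchange of summations / convergence when $f \in \mathcal H(K_\la)$ is a general (infinite) combination rather than a finitely supported one, since, as the paper emphasizes in Section~\ref{sec7}, $M(\la) = L(\la)^*$ is upper triangular and the sums $\sum_{y \ge x}(L(\la))_{(y,x)} f(y)$ are genuinely infinite. I would handle this exactly as Theorem~\ref{tm12} and Proposition~\ref{pn46} handle their transforms: first establish the isometry and the basis identity on the dense subspace of finitely supported sequences (equivalently, finite combinations of $e_n^{(\la)}$), where all sums terminate and Lemma~\ref{le:eq} applies termwise; then invoke the standard reproducing-kernel argument (the span of the $K_\la(\cdot, y)$ is dense, and $K_\la(\cdot,y) = \sum_n e_n^{(\la)}(y) e_n^{(\la)}$ has the expansion coming from \eqref{rksum1}) to extend $T_{-\la}$ continuously, noting that for each fixed $x$ the functional $f \mapsto (L(\la)^* f)(x)$ agrees on this dense set with evaluation against an $\ell^2$ sequence, so the limit is consistent. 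The word ``into'' rather than ``onto'' in the statement is deliberate — no surjectivity claim is needed — which simplifies matters: I need only the isometry, for which sending an orthonormal basis to an orthonormal set suffices.
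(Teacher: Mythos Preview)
The paper's own proof of this theorem is in fact empty --- just the heading \textbf{Proof:} followed immediately by the end-of-proof symbol --- so there is no argument to compare against. Your overall strategy (show the map sends the ONB $\{e_n^{(\lambda)}\}$ of $\mathcal H(K_\lambda)$ to the orthonormal set $\{\delta_n\}$ in $\ell^2(\mathbb Z_+)$, then extend by density) is the natural one and matches the spirit of the $\lambda=1$ arguments in Sections~\ref{sec5}--\ref{sec7}.

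However, the central computation is wrong. You invoke ``the $\lambda=1$ fact `$L(1)^*$ carries $e_n$ to $\delta_n$','' but this is false: $(L(1)^*e_n)(x)=\sum_{y\ge x}\binom{y}{x}\binom{y}{n}$ diverges for every $x$. What the paper actually establishes (Theorem~\ref{tot-le-matin}(1), equivalently the second relation in \eqref{eqen1}) is $e_n^\blacktriangledown=L(-1)e_n=\delta_n$, with $L(-1)=L(1)^{-1}$ \emph{lower} triangular and all sums finite. Your conjugation step also applies the diagonal factors in the wrong order: in $L(\lambda)^*=D(\lambda)^{-1}L(1)^*D(\lambda)$ the first operator acting on $e_n^{(\lambda)}$ is $D(\lambda)$, and $(D(\lambda)e_n^{(\lambda)})(x)=\lambda^{2x-n}e_n(x)$ is not a scalar multiple of $e_n$. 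All of this points to a typo in the theorem statement itself: the map consistent with the preceding definition $T_{-\lambda}(e_n^{(\lambda)})=\delta_n$ is $f\mapsto L(-\lambda)f$, not $f\mapsto L(\lambda)^*f$. Indeed, by Lemma~\ref{le:eq},
\[
(L(-\lambda)e_n^{(\lambda)})(x)=\sum_{y=n}^x(-\lambda)^{x-y}\binom{x}{y}\,\lambda^{y-n}\binom{y}{n}
=\lambda^{x-n}\sum_{y=n}^x(-1)^{x-y}\binom{x}{y}\binom{y}{n}=\lambda^{x-n}\delta_{n,x}=\delta_{n,x},
\]
with finite sums throughout; with this correction your ONB-to-ONB argument goes through cleanly and the convergence issues you flag disappear.
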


As a corollary we have:

\[
\label{lycee-voltaire}
\left(L(\lambda)L(\lambda)^*\right)_{(x,y)}=K_\lambda(x,y)=\sum_{n=0}^{x\wedge y}
e^{(\lambda)}_n(x)e^{(\lambda)}_n(y)^*,
\]
where $e^{(\lambda)}_n(x)=\lambda^{x-n}e_n(x)$.\\

Before stating another corollary to this theorem we mention the following lemma, whose proof is left to the reader.

\begin{lemma}
Let $(X_x(\cdot))$  be a Gaussian process with index $x$, with mean zero and probability space $(\Omega,\mathcal F, P)$.
Let $f$ be a non vanishing function on the index set. Then, the Gaussian process $(Y_x(\cdot))=(f(x)X_x(\cdot))$ has the same
probability space but with covariance $\mathbb E(Y_xY_y)=f(x)\mathbb E(X_xX_y)f(y)$.
\end{lemma}

\begin{corollary}
Let $(\Omega, \mathcal F,P^{\lambda})$  be probability spaces  and associated stochastic processes $X^{(\lambda)}$ such that
\[
\mathbb E_\lambda (X_x^{(\lambda)}X_y^{(\lambda)})=K_{\lambda}(x,y)
\]
Then the probability measures $P^{\lambda}$ are mutually singular, as we vary $\lambda\in(0,\infty)$.
\end{corollary}

\begin{proof} By the above lemma, the process $Y_x^{(\lambda)}=\frac{1}{\lambda^x}X_x^{(\lambda)}$ has the same probability
space but covariance function
\[
\mathbb E_\lambda (Y_x^{(\lambda)}Y_y^{(\lambda)})=\frac{1}{\lambda^x\lambda^y}K_{\lambda}(x,y).
\]
We write for $\lambda_1$ and $\lambda_2\in(0,\infty)$
\[
\begin{split}
\frac{1}{\lambda_1^x\lambda_1^y}K_{\lambda_1}(x,y)&=\sum_{n=0}^\infty \lambda_1^{-2n}e_n(x)e_n(y)\\
\frac{1}{\lambda_2^x\lambda_2^y}K_{\lambda_2}(x,y)&=\sum_{n=0}^\infty \lambda_2^{-2n}e_n(x)e_n(y)\\
&=\sum_{n=0}^\infty \mu_n\lambda_1^{-2n}e_n(x)e_n(y),
\end{split}
\]
where we have set $\mu_n=\frac{\lambda_2^{2n}}{\lambda_1^{2n}}$.
By \cite[Theorems 4.3 and  4.4, p.27]{MR0277027}, we see that $P^{\lambda_1}$ and $P^{\lambda_2}$ are equivalent if and only if
\[
\sum_{n=0}^\infty(1-\mu_n)^2<\infty,
\]
which holds if and only if $\lambda_1=\lambda_2$.
\end{proof}

{\bf Acknowledgments:} It is a pleasure to thank the referee for his/her thorough reading of the paper and very useful comments.

\bibliographystyle{amsplain}
%\bibliography{/users/faculty/math/dany/Travaux_courants/bib/all}
%bibliography{all}
\def\cprime{$'$} \def\lfhook#1{\setbox0=\hbox{#1}{\ooalign{\hidewidth
  \lower1.5ex\hbox{'}\hidewidth\crcr\unhbox0}}} \def\cprime{$'$}
  \def\cprime{$'$} \def\cprime{$'$} \def\cprime{$'$} \def\cprime{$'$}

\end{document}